\theoremstyle{definition}\newtheorem{remark}{Remark}[section]
\theoremstyle{plain}\newtheorem{propo}[remark]{Proposition}	
\newtheorem{theorem}[remark]{Theorem}
\newtheorem{example}[remark]{Example}
\newtheorem{definition}[remark]{Definition}
\newtheorem{proposition}[remark]{Proposition}
\theoremstyle{definition}
\newcommand{\C}{\mathbb{C}}
\newcommand{\Q}{\mathbb{Q}}
\newcommand{\N}{\mathbb{N}}
\newcommand{\I}{\mathcal{I}^O_{I,J}}
\newcommand{\Ic}[2]{\mathcal{I}^O_{#1,#2}}
\begin{document}
	
		
		\title{Weingarten Calculus and the {\tt IntHaar} Package for Integrals over Compact Matrix Groups}
		
		\author{Alejandro Ginory and Jongwon Kim}
		\date{}
	\maketitle	
		
		\begin{abstract}
			\noindent In this paper, we present a uniform formula for the integration of polynomials over the unitary, orthogonal, and symplectic groups using Weingarten calculus. From this description, we  further simplify the integration formulas and give several optimizations for various cases. We implemented the optimized Weingarten calculus in {\tt Maple} in a package called {\tt IntHaar} for all three compact groups. Here we will discuss its functions, provide instructions for the package, and produce some examples of computed integrals.
		\end{abstract}
		
		

	
	\section{Introduction}
	Integration over compact Lie groups with respect to the Haar measure is of great importance in a multitude of areas such as physics, statistics, and combinatorics to name a few.  In [1], B. Collins establishes a method of symbolic integration over the unitary group which is later also developed for the orthogonal and the symplectic groups in [2],[3]. This method of integration is now known as \emph{Weingarten calculus} and gives a combinatorial method for evaluating the integrals of polynomials with respect to the Haar measure. In this paper, we use Jack polynomials and twisted Gelfand pairs to give a unified presentation of Weingarten calculus. Using this presentation and combinatorial insights, we produce new optimization algorithms that greatly improve the efficiency and speed of computing these integrals. Finally, we present our implementation of the optimized Weingarten calculus for the unitary, orthogonal, and symplectic groups into a {\tt Maple} package called {\tt IntHaar}. Previously, the only package freely available online for Weingarten calculus dealt solely with the unitary group.
	
	The basic setup is as follows. Given a matrix group $G$, a polynomial function over $G$ is a polynomial in its matrix entries. We are interested in integrals of polynomials over the unitary group $U(d),$ the orthogonal grup $O(d),$ and the symplectic group $Sp(2d).$ In the literature, these integrals are often called the \emph{moments} of the group $G$, [3]. In order to express the integral of a monomial, we use lists to keep track of the indices of the matrix entries as shown below
	
	\begin{align*}
	\mathcal{I}^{U(d)}(I,J,I',J') &= \int_{U(d)} g_{i_1,j_1}...g_{i_n,j_n}\overline{g}_{i'_1,j'_1}...\overline{g}_{i'_n,j'_n}\ dU\\
	\mathcal{I}^{O(d)}(I,J) &=\int_{O(d)} g_{i_1,j_1}...g_{i_n,j_n}\ dO\\
	\mathcal{I}^{Sp(2d)}(I,J) &= \int_{Sp(2d)} g_{i_1,j_1}...g_{i_n,j_n}g_{i_{n+1},j_{n+1}}...g_{i_{n+n},j_{n+n}}\ dSp
	\end{align*}
	where 
	$$I = (i_1, i_2, ..., i_n), I' = (i'_1, i'_2, ..., i'_n), J = (j_1, j_2, ... j_n), J' = (j'_1, j'_2, ..., j'_n).$$
	For example, $I = (1,2,2,1)$ and $J=(1,3,1,3)$ gives 
	$$\int_{O(d)} g_{1,1}g_{2,3}g_{2,1}g_{1,3}\ dO$$
	and similarly for unitary group, which involves complex conjugates, $I = (1,2,2,4)$, $J = (1,3,3,5)$, $I' = (1,2,2,5)$ and $J'=(1,3,3,4)$ gives
	$$\int_{U(d)} |g_{1,1}|^2|g_{2,3}|^4 g_{4,5} \overline{g}_{5,4}\ dU.$$
	
	Using the method of Weingarten calculus, we use the lists and the so-called \emph{Weingarten function} (see \S 2)  to explicitly compute the integrals. In [1], [2], [3], and [7], the authors give various formulas for the Weingarten function that we summarize in a single formula depending on a parameter $\alpha,$ which is the so-called Jack parameter, corresponding to Jack polynomials $J^{(\alpha)}_\lambda$ (and a linear character on the symmetric group). Jack polynomials themselves have been extensively studied and can be computed very efficiently. The uniform formula makes apparent several methods of optimization that can be applied to \emph{all} of the groups in question. In particular, section 4 and section 5 give parallel reduction methods that were discovered as a result of the uniformity of Weingarten calculus. There are many optimizations worked out for the unitary case, for example see [9], while there is not much done for either the orthogonal or symplectic case. Here we present new algorithms that help compute integrals of special cases of monomials efficiently for all three compact groups.
	
	In 2011, Z. Puchala and J. Miszczak distributed a Mathematica package {\tt intU} which computes integrals of functions over the unitary group. Our implementation of Weingarten calculus is called {\tt IntHaar} and, in this work, we describe the functions in the package. In contrast to {\tt intU}, the main functions of {\tt IntHaar} compute integrals as rational functions in $d$ over all aformentioned compact groups, hence we do not write the argument $d$ in our formulas when there is no confusion. The package {\tt IntHaar} extensively uses J. Stembridge's package SF2.4v for symmetric functions and partitions [10]. In particular, SF2.4v is useful in manipulations and computations involving the Jack polynomials. 
	
	This paper is organized as follows. Section 2 discusses preliminaries involving partitions, zonal polynomials, and twisted Gelfand pairs. All of this material can be found in Macdonald's wonderful book on symmetric functions [6]. Section 3 summarizes the integral formulas in a uniform fashion involving Jack polynomials.	Section 4 presents integral formulas and optimizations for the unitary group. In section 5, we give analogous formulas and optimizations for the orthogonal group. In addition, we provide a combinatorial method of reduction based on the Young lattice for a special case. Many of the methods used in the orthogonal case can be directly adapted to the symplectic case, therefore we do not have a separate section for the symplectic group. The main difference between the orthogonal and symplectic cases involves keeping track of the signs of the permutations involved in the various sums. Finally, in sections 6 and 7 we describe the {\tt IntHaar} package in detail and give some examples.
	
	\bigskip
	
	\noindent\textbf{Acknowledgments}  It is our pleasure to thank Siddhartha Sahi for suggesting the implementation of Weingarten calculus in {\tt Maple} and for extensive discussions and advice on this paper. 
	
	\bigskip 
	
	\section{Preliminaries}
	
	We will first quickly review some basics about partitions. A \emph{partition} $\lambda = (\lambda_1, \lambda_2, ..., \lambda_l)$ of $n\in\N,$ $n>0,$ is a sequence of positive integers such that  $\sum_i \lambda_i = n$ and $\lambda_1\geq \lambda_2\geq\cdots\geq \lambda_{l}.$ A {\it Young diagram} of a partition $\lambda$ is a diagram: a left aligned collection of boxes whose $i$th row has exactly $\lambda_i$ many boxes. For example, for $\lambda = (4,3,1)$ has the Young diagram
	
	\ytableausetup{smalltableaux}
	
	\begin{center}
		\begin{ytableau}
			\ \ & \ \ &	\ \ & \ \ \cr
			\ \ & \ \ & \ \ \cr
			\ \ \cr
		\end{ytableau}
	\end{center}
	
	Given a partition $\lambda,$ we define the \emph{conjugate} $\lambda'$ to be the partition whose $i$th row equals the number of boxes in the $i$th column of the Young diagram of $\lambda.$ The Young diagram of $\lambda'$ is simply the transpose of the Young diagram of $\lambda.$ For example, for $\lambda=(4,3,1)$ as above, the conjugate is $\lambda' = (3,2,2,1)$ and its Young diagram is
	\begin{center}
		\begin{ytableau}
			\ \ & \ \ &	\ \ \cr
			\ \ & \ \  \cr
			\ \ & \ \ \cr
			\ \
		\end{ytableau}
	\end{center}
	
	For each $i$, multiplicity of $i$ in $\lambda$ is denoted by $m_i$. For a partition $\lambda = (\lambda_1, ..., \lambda_l)$ of $n$, we denote by $\ell(\lambda) = l$ the \emph{length} of $\lambda$ and $|\lambda| = n$ the \emph{weight} of $\lambda$. We write $\lambda \vdash n$ if $|\lambda| = n$.  Given a partition $\lambda$, we denote $2\lambda = (2\lambda_1, 2\lambda_2, ...)$ and $\lambda \cup \lambda = (\lambda_1, \lambda_1, \lambda_2, \lambda_2, ...)$, partitions of $2n$. For the trivial partition $(1,1,...,1)$, we write $1^n$. Let $\lambda,\mu$ be partitions of $n,$ then we introduce a partial order called the \emph{dominance order} by declaring $\lambda\ge \mu$ if and only if
	$$\sum_{i=1}^{k}{\lambda_i}\ge \sum_{i=1}^{k}{\mu_i}$$
	for all $k\geq 1.$
	
	\bigskip
	
	Now, we recall some facts regarding zonal polynomials. If $G$ is a finite group, we denote by $\C[G]$ the \emph{group algebra} of $G$ which defined to be the algebra with basis $\{g\}_{g\in G}$ and multiplication on the basis $gh=g\cdot h$ where $\cdot$ is the multiplication of the group $G.$ The multiplication is then extended linearly to the rest of the vector space. Another way of realizing the group algebra is as the space of functions $C(G)=\{f:G\to \C\}$ equipped with the convolution product
	$$f*g(x)=\sum_{y\in G}{f(xy)g(y^{-1})},$$
	for $x\in G.$ We leave it to the reader to verify that $\C[G]\cong C(G)$ as algebras via the map given on the basis of $\C[G]$
	$$g\mapsto \delta_{g}$$
	where $\delta_g(h)=0$ when $g\neq h$ and $\delta_g(g)=1.$ We will frequently identify both spaces  using the above algebra isomorphism. 
	
	We assume familiarity with basic representation theory of finite groups in what follows. Let $G$ be a group, $K$ a subgroup, and $\varphi:K\to\C^*$ a homomorphism, then the triple $(G,K,\varphi)$ is called a \emph{twisted Gelfand pair} if $e_{K}^\varphi\C[G]e_{K}^\varphi$  is a commutative algebra, where $\C[G]$ is the group algebra of $G$ and 
	$$e_{K}^\varphi=\frac{1}{|K|}\sum_{k\in K}{\varphi(k^{-1})k}.$$
	When $\varphi$ is trivial, we omit $\varphi$ and call $(G,K)$ simply a \emph{Gelfand pair}. Let us use the notation $V^\varphi=\C[G]e_{K}^\varphi$ and 
	$$\widehat{V}^\varphi=\{\lambda: \text{Hom}_G(V_\lambda,V^\varphi)\neq 0\}$$
	where $\lambda$ ranges over the equivalence classes of irreducible $G$-representations and $V_{\lambda}$ is a representative of this equivalence class. We restrict our attention to finite groups $G$ unless stated otherwise. Consider the so-called $\varphi$-\emph{zonal spherical functions}
	$$\omega_\lambda^\varphi=\overline{\chi}_\lambda e_K^\varphi$$
	where $\lambda\in \widehat{V}^\varphi$ and $\chi_\lambda$ is its character. As before, if $\varphi$ is trivial, then these functions are called \emph{zonal spherical functions}. By the orthogonality of characters, the $\omega_\lambda$ are orthogonal and can be rescaled so that they form a family of orthogonal idempotents $\widetilde{\omega}_\lambda.$ The subspace of $\C[G]$ generated by the $\omega_\lambda,$ $\lambda\in \widehat{V}^\varphi,$ is an algebra isomorphic to $C^\varphi(K\backslash G / K),$ the convolution algebra of functions $f:G\to \C$ satisfying
	$$f(kg)=f(gk)=\varphi(k^{-1})f(g).$$ It is clear that the elements of $C^\varphi(K\backslash G / K)$ depend only on their values on a set of double coset representatives and so there is a bijection between the $\widehat{V}^\varphi$ and the set of double cosets of $K.$ Fix such a bijection and identify both sets. For any $g\in G,$ we say that $\lambda$ is the \emph{coset-type} of $g$ if $KgK$ is the double coset of $K$ corresponding to $\lambda\in \widehat{V}^\varphi.$
	
	Suppose that the double cosets $K\backslash G/ K$ are indexed by partitions $\lambda$ of $n,$ then, for each $\lambda\in \widehat{V}^\varphi,$ we can define the symmetric polynomial
	$$Z_{\lambda}^\varphi=\frac{1}{|K|}\sum_{g\in G}{\omega_{\lambda}(g)p_{g}}$$
	where $p_{g}=p_{\mu}=p_{\mu_1}p_{\mu_2}\cdots p_{\mu_{\ell}},$ $\mu=(\mu_1,\ldots,\mu_{\ell})$ is the coset-type of $g,$ and 
	$$p_i=\sum_{j}{x_j^i}$$
	are the \emph{power sums}. It is worth mentioning here that the power sums form a basis of symmetric polynomials over $\Q,$ [6]. The polynomials $Z_\lambda$ are called the \emph{zonal polynomials} of the twisted Gelfand pair $(G,K,\varphi).$
	
	The three twisted Gelfand pairs of interest to us are
	\begin{enumerate}
		\item $(S_n\times S_n,S_n)$ where $S_n$ is the diagonal subgroup
		$$S_n\hookrightarrow S_n\times S_n$$
		$$\sigma\mapsto (\sigma,\sigma),$$
		\item $(S_{2n},H_{n}),$ where $H_n$ is the \emph{hyperoctahedral group} which we define to be the centralizer of the involution
		$$(12)(34)\cdots (2n-1,2n)$$
		in $S_{2n},$ and
		\item $(S_{2n},H_n,sgn|_{H_n})$ where $sgn$ is the sign character.
	\end{enumerate}
	We will refer to the twisted Gelfand pairs in $1,2,$ and $3$ by $(G_\alpha,K_\alpha)$ for $\alpha=1,2,1/2,$ respectively. Each compact group $U(d),$ $O(d),$ and $Sp(2d)$ is associated with a twisted Gelfand pair $(G_\alpha,K_\alpha,\varphi_\alpha)$ for $\alpha=1,2,1/2,$ respectively. 
	
	In all three cases, the characters of the group $G$ in $(G,K,\varphi)$ are indexed by partitions (see [5]), as are the coset-types. We will explain how to assign partitions to the coset-types, as it will be useful for some of the optimizations we discuss later. Given a permutation $\sigma \in S_n$, we can write $\sigma$ as a product of disjoint cycles $\sigma_1\sigma_2...\sigma_l$ where $|\sigma_1| \ge |\sigma_2| \ge ... \ge |\sigma_l|$ and $1$-cycles are included. So $\sigma$ corresponds to a partition $(|\sigma_1|, |\sigma_2|, ..., |\sigma_l|)$ of $n$. For example, $\sigma = (124)(35)(69)(7)(8) \in S_9$ corresponds to $(3,2,2,1,1)$. Moreover, two elements in $S_n$ are conjugate if and only if they have the same cycle-type. In the Gelfand pair $(S_n\times S_n,S_n),$ it is straightforward to check that the set $\{(1,\sigma):\sigma\in S_n\}\cong S_n$ is a complete set of left coest representatives of $S_n.$ A subset of these will give us a set of double coset representatives. Indeed, if $(1,\sigma)$ and $(1,\tau)$ lie in the same double coset, then there is are $\gamma,\gamma'\in S_n,$ such that 
	$$(\gamma\gamma',\gamma\sigma\gamma')=(1,\tau).$$ It follows that $\gamma'=\gamma^{-1}$ and so $\tau$ is a conjugate of $\sigma.$ Conversely, if two elements $\sigma,\tau$ are conjugate, then $(1,\sigma)$ and $(1,\tau)$ are in the same double coset. It follows that the double cosets are given by the conjugacy classes in $S_n.$ In particular, to each coset-type we assign the cycle-type of $\sigma$ for any representative of the form $(1,\sigma).$ 
	
	For the double cosets of $H_n$ in $S_{2n},$ we also assign partitions as follows. For each permutation $\sigma \in S_{2n}$, we consider the graph $\Gamma(\sigma)$ of  vertices $\{1,2,...,2n-1, 2n\}$ with edges $\{(2i-1,2i), (\sigma(2i), \sigma(2i)) \mid 1\le i \le n\}$. Let $\Gamma(\sigma)_i,$ $i=1,\ldots,l,$ be the connected components of $\Gamma(\sigma)$ indexed so that $\left|\Gamma(\sigma)_1\right| \ge \left|\Gamma(\sigma)_2\right| \ge ... \ge \left|\Gamma(\sigma)_l\right|,$ where $|\Gamma(\sigma)_i|$ is the number of edges in $\Gamma(\sigma)_i.$ The {\it coset-type} of $\sigma$ is the partition $\left(|\Gamma(\sigma)_1|, |\Gamma(\sigma)_2|, ..., 	|\Gamma(\sigma)_l|\right).$ For example, a permutation $(1,3,4)(2,5)(6) \in S_6$ has a coset-type $(3)$ whereas $(1,2,4)(5,6)(3)$ has $(2,1)$. Two permutations $w, w_1 \in S_{2n}$ have the same coset-type if and only if $w_1 \in H_n w H_n$. The coset-type of $w \in S_n$ is denoted $\rho(w)$ [6]. 
	
	Also useful is a description of left cosets of $H_n.$ The set of \emph{pair partitions} of $\{1,2,\ldots,2n\},$ which are defined to be partitions of the form
	$$\{\{a_1,b_1\},\ldots,\{a_n,b_n\}\}$$
	where $a_i< b_i$ for $i=1,\ldots,n,$ index the set of left coset representatives of $H_n$ in $S_{2n}.$ The set of left coset representatives, denoted $M_{2n},$ are permutations $a_1b_1a_2b_2\cdots a_nb_n,$ written in one-line notation, corresponding to pair partitions $\{\{a_1,b_1\},\ldots,\{a_n,b_n\}\}.$ More generally, we denote the set of left coset representatives by $R_\alpha,$ $\alpha=1,2,1/2,$ corresponding to $S_n,M_{2n},M_{2n},$ respectively. 
	
	The zonal polynomials of the above twisted Gelfand pairs are given by a well-known family of polynomials called \emph{Jack polynomials} $J_\lambda^{(\alpha)},$ for $\lambda\vdash n$ and $\alpha=1,2,1/2,$ respecitvely. The Jack polynomials are symmetric polynomials over the field $\Q(\alpha),$ where $\alpha$ can be taken as a formal parameter. To define these polynomials we need the following notion. Using the fact that the symmetric polynomials are linearly spanned by the power sums $p_{\lambda}$ for all partitions $\lambda,$ we can define the inner product on symmetric polynomials by
	$$\langle p_\lambda,p_\mu\rangle_\alpha = \alpha^{\ell(\lambda)}z_\lambda\delta_{\lambda\mu}$$
	where $z_\lambda =  \prod_{i\geq 1} i^{m_i}m_i!.$ The Jack polynomials in $n$ variables are the symmetric polynomials characterized by
	\begin{enumerate}
		\item $J_\lambda^{(\alpha)}=m_{\lambda,\lambda}+\sum_{\mu< \lambda}{a_{\lambda\mu}m_{\mu}}$ and
		\item $\langle J_\lambda^{(\alpha)},J_\mu^{(\alpha)}\rangle_\alpha = \delta_{\lambda\mu}$
	\end{enumerate}
	where $m_{\lambda}=\sum_{\nu\in S_n\cdot\lambda}{x^\nu}.$ These polynomials play an important role in the integrals over the compact groups we will be considering.
	
	
	\section{The Weingarten Function and Integral Formulas}
	
	Throughout this section, we will assume that the groups in question have associated dimension $d$ (or $2d$ in the symplectic case)  and that $d\geq n,$ unless otherwise specified. We will address the minor modifications needed in the case $d<n$ at the end of the section. Consider the element 
	$$\Phi^{(\alpha)}=\frac{1}{|K|}\sum_{\lambda\vdash n}{Z_\lambda(1)\widetilde{\omega}_\lambda}$$
	where $\widetilde{\omega}_\lambda=\frac{\chi_\lambda(1)}{|G|}\omega_\lambda$ corresponding to the twisted Gelfand pairs $(S_n\times S_n,S_n),$ $(S_{2n},H_n),$ and $(S_{2n},H_n,sgn|_{H_n})$ for $\alpha=1,2,1/2,$ respectively. This element is known as the \emph{Gram matrix} in [2], [3] and it is the (actual) Gram matrix associated to a certain spanning set of the centralizer algebra of $U(d),$ $O(d),$ or $Sp(2d)$ in $\text{End}(V^{\otimes n})$ where $V$ is the defining representation of the respective compact group. It is worth noting that the spanning set in the case of $U(d)$ is in fact a basis, while in the remaining cases the spanning set is not. For more details on the Gram matrix, see [2],[3].
	
	We define the \emph{Weingarten function} $W^{(\alpha)}$ in all cases to be the inverse of $\Phi^{(\alpha)}$ in $e_{K}^\varphi\C[G]e_{K}^\varphi.$ Explicitly, 
	$$W^{(\alpha)}=|K|\sum_{\lambda\vdash n}\frac{1}{Z_\lambda(1)}\widetilde{\omega}_\lambda$$
	and a direct computation will show that 
	\begin{align*}
	W^{(1)}(\sigma,\tau)&=\frac{1}{n!^2}\sum_{\lambda\vdash n}\frac{\chi_\lambda(1)^2}{s_\lambda(1)}\chi_\lambda(\sigma^{-1}\tau),\\
	W^{(2)}(\sigma)&=\frac{2^n n!}{(2n)!} \sum_{\lambda\vdash n}\frac{\chi_{2\lambda}(1)}{J_\lambda^{(2)}(1)}\omega_{\lambda}(\sigma),\\
	W^{(1/2)}(\sigma)&=\frac{2^n n!}{(2n)!} \sum_{\lambda\vdash n}\frac{\chi_{\lambda\cup\lambda}(1)}{J_\lambda^{(1/2)}(1)}\omega_{\lambda}(\sigma),
	\end{align*}
	for $(S_n\times S_n,S_n),$ $(S_{2n},H_n),$ and $(S_{2n},H_n,sgn|_{H_n}),$ respectively, where the Jack polynomial is taken in $d$ many variables. We will be concerned with summing over left cosets of $K$ in the pair $(G,K),$ therefore in the case $(S_n\times S_n,S_n),$ where the left cosets representatives are
	$$\{(1,\sigma):\sigma \in S_n\},$$
	we will define $W^U(\sigma)=W^U(1,\sigma).$ 
	
	We will now give a uniform description of how to compute integrals of polynomials over the unitary, orthogonal, and symplectic groups with respect to their corresponding Haar measures. To do this, we will slightly change the notation used in the previous sections. In the unitary case, we have matrix elements of the form $g_{ij}$ and $\overline{g}_{ij},$ which is the complex conjugate of $g_{ij}.$ Integrals of monomials of the form 
	$$\int_{U(d)} g_{i_1j_1}\cdots g_{i_mj_m}\overline{g}_{i'_1j'_1}\cdots \overline{g}_{i'_nj'_n}\ dU,$$
	vanish whenever $m\neq n,$ [3]. Similarly, in the orthogonal and symplectic cases, we have matrix elements of the form $g_{ij}$ where integrals of monomials of the form 
	$$\int_{G} g_{i_1j_1}\cdots g_{i_mj_m}\ dG,$$
	vanish whenever $m$ is odd, [3]. 
	
	In the case of the unitary group, monomials of the form
	$$g_{i_1j_1}\cdots g_{i_nj_n}\overline{g}_{i'_1j'_1}\cdots \overline{g}_{i'_nj'_n},$$
	are specified using the lists $I=(i_1,i'_1,\ldots,i_n,i'_n)$ and $J=(j_1,j'_1,\ldots,j_n,j'_n).$
	In the orthogonal case, we specify a monomial
	$$g_{i_1j_1}g_{i_2j_2}\cdots g_{i_{2n}j_{2n}},$$
	by the lists $I=(i_1,i_2,\ldots,i_{2n})$ and $J=(j_1,\ldots,j_{2n}).$ In the symplectic case $Sp(2d),$ we specify a monomial
	$$g_{i_1j_1}g_{i_2j_2}\cdots g_{i_{2n}j_{2n}},$$
	by the lists $I=(i_1,i_2,\ldots,i_{2n})$  $J=(j_1,\ldots,j_{2n})$ where $$i_k,j_k\in \{-d,-d+1,\ldots,-1,1,2,\ldots,d\}.$$ In all cases, the list $I$ can be rearranged so that $i_1\leq i_2\leq \cdots $ and $i_1'\leq i_2'\leq\cdots.$ The groups $G_\alpha$ act (on the right) on their respective groups by permuting indices, in particular, $(\sigma,\tau)\in G_1=S_n\times S_n$ acts in the natural way
	$$(i_1,i'_1,\ldots,i_n,i'_n)\cdot (\sigma,\tau)=(i_{\sigma(1)},i'_{\tau(1)},\ldots,i_{\sigma(n)},i'_{\tau(n)}).$$  
	We also have the action of representatives of left cosets $R_\alpha$ on lists by the rules
	$$(i_1,\ldots,i_{2n})\cdot \{\{a_1,b_1\},\ldots,\{a_n,b_n\}\}=(i_{a_1},i_{b_1},\ldots,i_{a_n},i_{b_n}), \text{ for }\alpha=2,$$
	$$(i_1,\ldots,i_{2n})\cdot \{\{a_1,b_1\},\ldots,\{a_n,b_n\}\}=(i_{a_1},-i_{b_1},\ldots,i_{a_n},-i_{b_n}), \text{ for }\alpha=1/2,$$
	and in the case of $\alpha=1,$ $(1,\sigma)$ acts by the right action of the symmetric group. Note that the action of pair partitions in $M_{2n}$ is precisely their action as permutations (up to sign).
	We define the function 
	$$\delta_{I}(\sigma)=\left\{\begin{matrix}
	(-1)^{neg}& ; &\text{ if } \left(I\cdot\sigma\right)_{2k-1}=\left(I\cdot\sigma\right)_{2k} \text{ for }k=1,\ldots,n\\
	0 & ; &\text{ otherwise}
	\end{matrix}
	\right. ,$$
	where $\sigma\in R_\alpha,$ $\left(J\right)_{k}$ denotes the $k$th entry of $J$ and $neg$ is the number of indices $k\in\{1,\ldots,n\}$ such that $\left(I\cdot\sigma\right)_{2k-1}<0.$ Note that $neg=0$ for $\alpha=1,2.$ 
	
	Let $\mathcal{I}^{(\alpha)}(I,J)$ be the integral of the monomial given by the lists $I,J$ in the case $\alpha,$ then [2] and [3] prove that
	$$\mathcal{I}^{(\alpha)}(I,J)=\sum_{\sigma,\tau\in R_\alpha}{\delta_I(\sigma)\delta_J(\tau)W^{(\alpha)}(\sigma^{-1}\tau)}.$$ 
	We will also use the obvious notation $\mathcal{I}_{I,J}^U,$ $\mathcal{I}_{I,J}^O,$ $\mathcal{I}_{I,J}^{Sp},$ $W^U,$ $W^O,$ and $W^{Sp}$ (that we also used in the introduction). We can write the above formula in a different way. For this, we need the right multiplication action of $G_\alpha$ on $R_\alpha.$ In particular, for $\alpha=1,$ $(\tau_1,\tau_2)\in S_n\times S_n$ acts on $(1,\sigma)$ by $(1,\sigma\tau_2\tau_1^{-1}).$ For $\alpha=2,1/2,$ $\tau\in S_{2n}$ acts on $\{\{i_1,j_1\},\ldots,\{i_n,j_n\}\}\in M_{2n}$ by 
	$$\{\{a_1,b_1\},\ldots,\{a_n,b_n\}\}\mapsto \{\{\tau(a_1),\tau(b_1)\},\ldots,\{\tau(a_n),\tau(b_n)\}\}.$$
	Let $\sigma_I,\tau_J\in R_\alpha$ such that 
	$\delta_I(\sigma_I)=|\delta_J(\tau_J)|=1$ and 
	$$S_{I}=\{\gamma\in G_\alpha:(I\cdot \sigma_I)\cdot \gamma = (I\cdot \sigma_I)\}/K_\alpha$$
	$$S_{J}=\{\gamma\in G_\alpha:(I\cdot \tau_J)\cdot \gamma = (I\cdot \tau_J)\}/K_\alpha$$
	be the `stabilizers` of the lists in question. 
	
	\begin{proposition}
		Using the above notation, we have that
		$$\mathcal{I}^{(\alpha)}(I,J)=\sum_{\sigma\in \sigma_IS_I,\tau\in \tau_JS_J}{\delta_I(\sigma)\delta_J(\tau)W^{(\alpha)}(\sigma^{-1}\tau)}.$$
		In particular, when $\alpha=1,2$ we have
		$$\mathcal{I}^{(\alpha)}(I,J)=\sum_{\sigma\in \sigma_IS_I,\tau\in \tau_JS_J}{W^{(\alpha)}(\sigma^{-1}\tau)}.$$
	\end{proposition}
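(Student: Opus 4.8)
\noindent\emph{Proof strategy.} The plan is to start from the formula established just above the statement, namely $\mathcal{I}^{(\alpha)}(I,J)=\sum_{\sigma,\tau\in R_\alpha}\delta_I(\sigma)\delta_J(\tau)W^{(\alpha)}(\sigma^{-1}\tau)$, and to exploit that a summand vanishes unless $\delta_I(\sigma)\neq 0$ and $\delta_J(\tau)\neq 0$. Hence the proposition is equivalent to the purely combinatorial identity $\{\sigma\in R_\alpha:\delta_I(\sigma)\neq 0\}=\sigma_I S_I$ together with its analogue for $J$. Granting this, restricting the index set of the sum discards only zero summands and therefore preserves the value, which gives the first display; for $\alpha=1,2$ one has $neg=0$ by definition, so $\delta_I(\sigma)\in\{0,1\}$ and $\delta_I(\sigma)=1$ on the support, whence the $\delta$-factors collapse and the second display follows.

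The inclusion $\sigma_I S_I\subseteq\{\delta_I\neq 0\}$ should be the easy direction. Writing $L=I\cdot\sigma_I$, which is a paired list because $\delta_I(\sigma_I)=1$, I would use the compatibility of the $G_\alpha$-action on $R_\alpha$ with its action on lists to write $I\cdot(\sigma_I\cdot\gamma)=(I\cdot\sigma_I)\cdot\gamma=L\cdot\gamma$; for $\gamma\in\mathrm{Stab}(L)=\{\gamma\in G_\alpha:L\cdot\gamma=L\}$ this equals $L$, so $\sigma_I\cdot\gamma$ is again valid. I would also record at this point that $K_\alpha$ acts trivially on $R_\alpha$ (for $\alpha=1$, $(1,\sigma)\cdot(\rho,\rho)=(1,\sigma)$; for $\alpha=2,1/2$, the hyperoctahedral relabellings fix each pair-partition class), so that $\gamma\mapsto\sigma_I\cdot\gamma$ genuinely descends to $S_I=\mathrm{Stab}(L)/K_\alpha$ and the symbol $\sigma_I S_I$ is unambiguous.

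The reverse inclusion $\{\delta_I\neq 0\}\subseteq\sigma_I S_I$ is the crux, and I would prove it by a transitivity argument phrased directly in terms of matchings. The valid $\sigma$ are exactly the perfect matchings (for $\alpha=2,1/2$), respectively the bijections $i_k=i'_{\sigma(k)}$ (for $\alpha=1$), that pair positions of equal $I$-value; the set of such matchings is a torsor under $\prod_v S_{m_v}$, the product over value-classes $v$ of the symmetric groups permuting the positions of value $v$, since $S_m$ acts transitively on the perfect matchings (resp. bijections) of an $m$-element class. This product is precisely $\mathrm{Stab}(L)$, so every valid $\sigma$ is obtained from $\sigma_I$ by an element of $\mathrm{Stab}(L)$, i.e. lies in $\sigma_I S_I$. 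Combined with the previous paragraph this yields the desired set equality, and in fact shows every element of $\sigma_I S_I$ is valid.

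I expect the main obstacle to be making this last step rigorous in the presence of the slightly twisted $G_\alpha$-action on $R_\alpha$ introduced in the preliminaries, which for $\alpha=1$ is not a literal group action; this is exactly why I would argue transitivity at the level of matchings within equal-value classes rather than by manipulating group elements $\sigma_I\cdot g\cdot h$. A second, more bookkeeping-type obstacle is to verify that the fibres of $\gamma\mapsto\sigma_I\cdot\gamma$ are single $K_\alpha$-cosets, so that $S_I=\mathrm{Stab}(L)/K_\alpha$ enumerates the valid set bijectively with no double counting; this follows from the triviality of the $K_\alpha$-action together with a count matching $\#\sigma_I S_I=\prod_v(m_v-1)!!$ (resp. $\prod_v m_v!$ for $\alpha=1$) against the number of valid matchings. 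The identical treatment of $\tau_J S_J$ and the collapse $\delta\equiv 1$ for $\alpha=1,2$ are then routine, completing both formulas.
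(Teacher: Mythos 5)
Your proposal is correct and follows essentially the same route as the paper: both reduce the proposition to the bijective identification of $\{\sigma\in R_\alpha:\delta_I(\sigma)\neq 0\}$ with $\sigma_I S_I$, your transitivity of $\mathrm{Stab}(I\cdot\sigma_I)=\prod_v S_{m_v}$ on value-respecting matchings being the same fact as the paper's observation that $\sigma_1^{-1}\sigma_2$ stabilizes $I\cdot\sigma_1$ whenever $\delta_I(\sigma_1)=\delta_I(\sigma_2)=1$, and your no-double-counting step being the paper's remark that $\sigma_I\gamma_1 K_\alpha=\sigma_I\gamma_2 K_\alpha$ if and only if $\gamma_1 K_\alpha=\gamma_2 K_\alpha$ (which it gets by left cancellation, a bit more directly than your cardinality count). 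Two terminological slips do not damage the logic: for $\alpha=2,1/2$ the valid matchings form a transitive but not free $\mathrm{Stab}(I\cdot\sigma_I)$-set (so not a torsor, as your own fiber analysis implicitly concedes), and ``$K_\alpha$ acts trivially on $R_\alpha$'' is accurate only for the right-translation reading $\sigma\mapsto$ representative of $\sigma\gamma K_\alpha$ (the reading the proposition in fact requires), not for the relabelling action as literally written in Section 3.
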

	
	\begin{proof}
		Suppose that for our list $I,$ we have $0<i_1\leq i_2\leq \cdots \leq i_n,$ then $\delta_I(\sigma)=1$ or $0$ in all cases because the pair partitions are defined so that $a_i<b_i$ (see [2]). If $\sigma_1$ and $\sigma_2$ satisfy $\delta_I(\sigma_j)=1$ for $j=1,2,$ then in all cases $\sigma_1^{-1}\sigma_2$ stabilizes $I\cdot \sigma_1.$ If we fix $\sigma_I$ so that $I\cdot \sigma_1=I\cdot \sigma_I,$ then $\sigma_I\gamma_1K_\alpha = \sigma_I\gamma_2 K_\alpha$ if and ony if $\gamma_1K_\alpha = \gamma_2 K_\alpha.$ Similar reasoning holds for the list $J.$ The result then follows immediately. 
	\end{proof}
	
	Finally, in the more general case where $d<n,$ the formulas above hold with the only modification being that in the definition of the Weingarten function, the sum ranges over partitions $\lambda$ such that $\ell(\lambda)\leq d.$ 
	
	\section{Integrals over the Unitary Group}
	
	\subsection{Normalization of Lists}
	In computing the Weingarten function in the case $\alpha=1,$ i.e., the unitary group $U(d)$ case for a fixed $d\geq 1,$ we can take advantage of various facts in order to simplify the form of the lists in question. Let us revert to the original format for the lists introduced in the introduction
	$$I=(i_1,\ldots,i_n), I'=(i_1',\ldots,i_n')$$
	$$J=(j_1,\ldots,j_n), J'=(j_1',\ldots,j_n')$$
	and use the notation
	$$g_{I,J}\overline{g}_{I',J'}:=g_{i_1j_1}\cdots g_{i_nj_n}\overline{g}_{i'_1j'_1}\cdots \overline{g}_{i'_nj'_n}.$$
	Since we are only interested in the integrals of these monomials, we will be considering transformations of the lists $I\to I_0, I'\to I_0',$ $J\to J_0,$ $J'\to J_0'$ that keep the integral invariant, i.e.,
	$$\mathcal{I}^{U(d)}(I,J,I',J')=\mathcal{I}^{U(d)}(I_0,J_0,I'_0,J'_0).$$
	By the commutativity of the variables, we can assume that $I$ is in the following order
	$$(i_1,\ldots,i_{\lambda_1},i_{\lambda_1+1},\ldots,i_{\lambda_1+ \lambda_2},\ldots, i_{n})$$
	where $\lambda_1\geq \lambda_2\geq \cdots \geq \lambda_\ell$ and 
	$$i_1=\cdots=i_{\lambda_1}$$
	$$i_{\lambda_1+1}=\cdots=i_{\lambda_1 + \lambda_2}$$
	$$\vdots$$
	$$i_{n - \lambda_{\ell}+1}=\cdots=i_{n}.$$
	We can do the same for $I'$ but not necessarily for $J,J'.$ Notice that the partition $\lambda$ is unique and independent of the actual values of the $i_k.$
	
	\begin{definition}
		For any list $I=(i_1,\ldots,i_n),$ we call the partition $\lambda$ described above as the \emph{multiplicity partition} of $I.$
	\end{definition}
	By the invariance of the Haar measure under left multiplication, we can multiply $g_{I,J}\overline{g}_{I',J'}$ by permutation matrices so that the resulting monomial $g_{I_0,J_0}\overline{g}_{I'_0,J'_0}$ has list $I_0$ of the form 
	$$(\underbrace{1,\ldots,1}_{\lambda_1},\underbrace{2,\ldots,2}_{\lambda_2},\ldots,\underbrace{\ell,\ldots,\ell}_{\lambda_\ell}).$$
	Indeed, after normalizing in this way and rearranging $I_0'$ in ascending order, we can achieve $I_0=I_0'$ whenever $\mathcal{I}^{U(d)}(I,J,I',J')\neq 0.$ 
	
	The exact same normalization is not possible for the lists $J$ and $J'$ in the general case. Nonetheless, by commuting variables we can arrange the \emph{blocks}
	$$J^1=(j_1,\ldots,j_{\lambda_1})$$
	$$J^2=(j_{\lambda_1+1},\ldots,j_{\lambda_1 + \lambda_2})$$
	$$\vdots$$
	$$J^\ell=(j_{\lambda_1 + \cdots + \lambda_{\ell-1}+1},\ldots,j_{n})$$
	in order with respect to their respective multiplicity partitions $\mu^k,$ $k=1,\ldots,\ell.$ The same can be done to $J'.$ Writing the lists in this form shows that we can work with a single partition $\lambda$ and a pair of lists $J,J'$ instead of 4 lists. 
	
	Another way to represent the same data is by first normalizing $I,I'$ as above and then finding a permutation $\sigma\in S_n$ so that $\sigma J$ is in the form
	$$(\underbrace{1,\ldots,1}_{\mu_1},\underbrace{2,\ldots,2}_{\mu_2},\ldots,\underbrace{\ell,\ldots,\ell}_{\mu_{\ell'}})$$
	where $\mu$ is the multiplicity partition of $J.$ We also find $\sigma'\in S_n$ to put $J'$ in the same form. In the case that the integral of the monomial in nonzero, the multiplicity partitions for $J$ and $J'$ will coincide. We can recover the integral from $\lambda,\mu,\sigma,\sigma'.$ 
	
	\subsection{Reduction to Single Coset Case}
	In the case $\alpha=1,$ it is easy to see that $S_I$ can be identified with the parabolic subgroup $1\times S'_I$ where $S'_I$ is the stabilizer of the list $(i_{\sigma_I(1)}',\ldots,i_{\sigma_I(n)}').$ A similar identification can be done for $J.$ Therefore, we will write $S_I\subseteq S_n$ where we make the above identification.
	
	If we set $\gamma=\sigma_I^{-1}\tau_J,$ we can write the integral formula as
	$$\mathcal{I}^U(I,J)=\sum_{\sigma\in S_I,\tau\in S_J}{W^U(\sigma\gamma\tau)}.$$	
	Since $W^U$ is a linear combination of characters, we will compute $\chi_\lambda(\sigma)$ for $\sigma\in S_I\gamma S_J.$ Indeed, if we define the idempotents
	$$e_I=\frac{1}{|S_I|}\sum_{\sigma\in S_I}\sigma \text{ and } e_J=\frac{1}{|S_J|}\sum_{\sigma\in S_J}\sigma$$
	in $\C[S_n],$ then $$e_I\chi_\lambda e_J(\gamma)=\frac{1}{|S_I|\cdot |S_J|}\sum_{\sigma\in S_I,\tau\in S_J}{\chi_{\lambda}(\sigma\gamma\tau)},$$
	where $e_I\chi_\lambda e_J$ is regarded as a function on $S_n$ in the standard way.	Interchanging the order of summation, the integral formula has the form
	\begin{align*}
	\mathcal{I}^U(I,J)&=\frac{1}{n!^2}\sum_{\lambda\vdash n}\frac{\chi_\lambda(1)^2}{s_\lambda(1)}\sum_{\sigma\in S_I,\tau\in S_J}{\chi_\lambda(\sigma\gamma\tau)}\\
	&=\frac{|S_I|\cdot |S_J|}{n!^2}\sum_{\lambda\vdash n}\frac{\chi_\lambda(1)^2}{s_\lambda(1)}{e_I\chi_\lambda e_J(\gamma)}.
	\end{align*}
	Note also that 
	$$\left(\chi_\lambda e_I\right)\left(\chi_\lambda e_J\right)=\frac{n!}{\chi_{\lambda}(1)}e_I \chi_\lambda e_J,$$
	and so it suffices to compute $\chi_\lambda e_I$ and $\chi_\lambda e_J$ to find $\mathcal{I}^U(I,J).$ We therefore have the following way of writing the integral formula.
	
	\begin{proposition}
		Using the notation described above,
		$$\mathcal{I}^U(I,J)=\frac{|S_I|\cdot |S_J|}{n!}\left(\sum_{\lambda\vdash n}\sqrt{\frac{\chi_\lambda(1)}{s_\lambda(1)}}{\widetilde{\chi}_\lambda e_I}\right)\left(\sum_{\lambda\vdash n}\sqrt{\frac{\chi_\lambda(1)}{s_\lambda(1)}}{\widetilde{\chi}_\lambda e_J}\right)(\gamma),$$
		where $\widetilde{\chi}_\lambda$ are the central idempotents $\frac{\chi_{\lambda}(1)}{n!}\chi_\lambda.$ 
	\end{proposition}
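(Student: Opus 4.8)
The plan is to start from the formula derived immediately above the proposition,
$$\mathcal{I}^U(I,J)=\frac{|S_I|\cdot |S_J|}{n!^2}\sum_{\lambda\vdash n}\frac{\chi_\lambda(1)^2}{s_\lambda(1)}{e_I\chi_\lambda e_J(\gamma)},$$
and to verify that the claimed product of two single sums reproduces exactly this expression. The structural fact I will exploit is that each $\chi_\lambda$ is a class function, hence central in $\C[S_n]$, so it commutes with the idempotents $e_I$ and $e_J$; consequently the rescaled characters $\widetilde{\chi}_\lambda=\frac{\chi_\lambda(1)}{n!}\chi_\lambda$ are primitive central idempotents and satisfy the orthogonality relation $\widetilde{\chi}_\lambda\widetilde{\chi}_\mu=\delta_{\lambda\mu}\widetilde{\chi}_\lambda$.

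First I would expand the product of the two sums in the statement, obtaining a double sum over $\lambda$ and $\mu$ of terms $\sqrt{\frac{\chi_\lambda(1)}{s_\lambda(1)}}\sqrt{\frac{\chi_\mu(1)}{s_\mu(1)}}(\widetilde{\chi}_\lambda e_I)(\widetilde{\chi}_\mu e_J)$. Using centrality to slide $\widetilde{\chi}_\mu$ past $e_I$, and then the idempotent orthogonality, each product $(\widetilde{\chi}_\lambda e_I)(\widetilde{\chi}_\mu e_J)$ collapses to $\delta_{\lambda\mu}\widetilde{\chi}_\lambda e_I e_J$, so only the diagonal $\lambda=\mu$ survives and the two square roots combine into the single factor $\frac{\chi_\lambda(1)}{s_\lambda(1)}$. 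This is precisely the role the square roots play in the statement.

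Next I would rewrite each surviving diagonal term using $\widetilde{\chi}_\lambda=\frac{\chi_\lambda(1)}{n!}\chi_\lambda$ together with centrality, namely $\widetilde{\chi}_\lambda e_I e_J=\frac{\chi_\lambda(1)}{n!}e_I\chi_\lambda e_J$, which is the evaluated form of the identity $\left(\chi_\lambda e_I\right)\left(\chi_\lambda e_J\right)=\frac{n!}{\chi_\lambda(1)}e_I\chi_\lambda e_J$ recorded just before the proposition. Collecting the scalar factors $\frac{\chi_\lambda(1)}{s_\lambda(1)}\cdot\frac{\chi_\lambda(1)}{n!}=\frac{1}{n!}\frac{\chi_\lambda(1)^2}{s_\lambda(1)}$ and multiplying by the prefactor $\frac{|S_I|\cdot|S_J|}{n!}$ reproduces exactly the coefficient $\frac{|S_I|\cdot|S_J|}{n!^2}\frac{\chi_\lambda(1)^2}{s_\lambda(1)}$ appearing in the formula above, completing the identification.

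The only genuine obstacle is verifying that the cross terms $\lambda\neq\mu$ vanish, which is where the square-root factorization is purchased: it rests entirely on the orthogonality of the central idempotents, and I would take care to invoke centrality correctly when passing $\widetilde{\chi}_\mu$ through $e_I$ before applying $\widetilde{\chi}_\lambda\widetilde{\chi}_\mu=\delta_{\lambda\mu}\widetilde{\chi}_\lambda$. The remaining steps are a routine bookkeeping of the scalar factors $\chi_\lambda(1)$ and $n!$.
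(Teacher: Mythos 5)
Your proposal is correct and follows essentially the same route the paper intends: the paper states the formula $\mathcal{I}^U(I,J)=\frac{|S_I|\cdot|S_J|}{n!^2}\sum_{\lambda}\frac{\chi_\lambda(1)^2}{s_\lambda(1)}e_I\chi_\lambda e_J(\gamma)$ and the identity $(\chi_\lambda e_I)(\chi_\lambda e_J)=\frac{n!}{\chi_\lambda(1)}e_I\chi_\lambda e_J$ immediately before the proposition and leaves the verification implicit, which is exactly what you carry out. Your explicit treatment of the cross terms via centrality of the $\chi_\lambda$ and orthogonality of the idempotents $\widetilde{\chi}_\lambda$ fills in the one detail the paper glosses over, and your bookkeeping of the scalar factors is accurate.
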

	
	The above proposition shows that it is sufficient to compute all $\chi_\lambda e_I$ in order to compute the integrals. We can further reduce this problem to computing the size cycle-types inside left cosets of so-called parabolic subgroups of $S_n,$ that is, subgroups of the form $S_{n_1}\times \cdots S_{n_\ell}.$ 
	
	\subsection{List Reduction Algorithm}
	The Weingarten formula is easier to compute whenever the stabilizer subgroups $S_I,S_J$ are as small as possible. In the extreme case, $S_I=\{e\}$ precisely when the set of indices $\{i_1,\ldots,i_n\}=\{1,\ldots,n\}.$ We will describe an algorithm that breaks up the integral formula  
	$$\mathcal{I}^U(I,J)=\sum_{j=1}^{k}\mathcal{I}^U(I_j,J)$$ 
	into a sum of integrals $\mathcal{I}_{I_j,J}^U$ with smaller stabilizer groups $S_{I_j}.$ 
	
	Consider a list $I=(i_1,i'_1,\ldots,i_n,i'_n)$ and for simplicity we can assume that $i_1=1.$ Let $p_1,\cdots, p_m$ be all of the indices such that $i'_{p_k}=1.$ We can partition the permutations $\sigma\in S_n$ into the sets
	$$A_k=\{\sigma\in S_n:\sigma(1)=p_k\}.$$
	We can write the integral formula as
	$$\mathcal{I}^U(I,J)=\sum_{k=1}^{m}\sum_{\substack{\sigma\in \sigma_IS_I\cap A_k,\\\tau\in \tau_JS_J}}{W^U(\sigma^{-1}\tau)}.$$ The set of permutations $\sigma_IS_I\cap A_k$ coincides with the set of permutations $\sigma$ that act by
	$$(x,i'_1,\ldots,i_{p_k},x,\ldots,i_n,i'_n)\mapsto (x,x,\ldots)$$
	and satisfies $\delta_I(\sigma)=1,$ where $x$ is a variable. If $k>1,$ then we can replace $x$ by an unused integer in $\{1,\ldots,n\},$ which we can assume without loss of generality is $n.$ It follows that $\sigma_IS_I\cap A_k$ is the set of permutations that transforms $(i_1',\ldots,n,\ldots,i'_n)$ into $(n,i_2,\ldots,i_n).$ The sum of the Weingarten function over $\sigma_IS_I\cap A_k$ and $\tau_JS_J$ is given by the integral $\mathcal{I}_{I_k,J}^U$ where
	$$I_j=(n,i'_2,\ldots,i_{p_k},n,\ldots,i_n,i'_n),$$ and so
	$$\mathcal{I}^U(I,J)=\sum_{k=1}^{m}\mathcal{I}^U(I_k,J).$$ 
	Continuing in the fashion, we can write $\mathcal{I}^U(I,J)$ as a sum of integrals $\mathcal{I}^U(I_0,J),$ where the set of indices in $I_0$ is equal to $\{1,\ldots,n\}.$ 
	
	\begin{remark}
		Whenever two pairs of lists $I_0,J_0$ and $I_1,J_1$ have $I_0=I_1=(1,2,\ldots,n),$ it is clear that their integrals are equal whenever the multiplicity partitions of the lists $J_0$ and $J_1$ are the same.
	\end{remark}
	
	\subsection{Graph Presentation of Lists}
	
	It is important to know when two different pairs of lists $I_0,J_0$ and $I_1,J_1$ give the same integral. By using the normalization discussed in section 4.1 and representing both lists $I$ and 
	$J$ as an $\ell$-partite graph $\mathcal{G}_{I,J}.$ The $k$th part has vertices $\mu^k_{\lambda_1+\cdots+\lambda_{k-1}+1},\ldots,\mu^k_{\lambda_1+\cdots+\lambda_k}$ (each taken distinctly) for $k=1,\ldots,\ell.$ The edges are given by $\{\mu^{k_1}_{i_1},\mu^{k_2}_{i_2}\}$ whenever the $\mu^{k_1}_{i_1}$th entry of $J^{k_1}$ is equal to the $\mu^{k_2}_{i_2}$th entry of $J^{k_2}.$ Any such graph $\mathcal{G}_{I,J}$ can have at most $n$ connected components and that every connected component is a complete graph.
	
	\begin{propo}
		Using the reduction algorithm, we can assume that $I_0=I_1=(1,2,\ldots,n)$ and $J_1,J_2$ are arbitrary lists. The graphs $\mathcal{G}(I_k,J_k),$ for $k=0,1,$ are isomorphic if and only if $\mathcal{I}^U(I_0,J_0)=\mathcal{I}^U(I_1,J_1).$
	\end{propo}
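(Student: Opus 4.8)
\emph{Proof proposal.} The plan is to reduce both sides of the equivalence to a single combinatorial invariant and then to show that the integral, viewed as a rational function of $d$, recovers that invariant. First I would unwind the graph in the case at hand: since $I_0=I_1=(1,2,\ldots,n)$, the multiplicity partition of $I_0$ is $1^n$, so every part of the $\ell$-partite graph $\mathcal{G}_{I,J}$ consists of a single vertex and $\mathcal{G}(I_k,J_k)$ is just the graph on $\{1,\ldots,n\}$ that joins two positions precisely when the corresponding entries agree. As the text records, each connected component is then a complete graph, so the isomorphism type of $\mathcal{G}(I_k,J_k)$ is recorded exactly by the partition $\nu_k$ whose parts are the component (clique) sizes. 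Hence $\mathcal{G}(I_0,J_0)\cong\mathcal{G}(I_1,J_1)$ if and only if $\nu_0=\nu_1$, and the claim becomes: $\nu_0=\nu_1$ if and only if $\mathcal{I}^U(I_0,J_0)=\mathcal{I}^U(I_1,J_1)$.

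The forward implication would then be immediate from the Remark preceding the statement: with $I_0=(1,\ldots,n)$ the integral depends only on the multiplicity partition of $J_0$, and $\nu_0$ is precisely that partition in the reduced presentation, so $\nu_0=\nu_1$ forces $\mathcal{I}^U(I_0,J_0)=\mathcal{I}^U(I_1,J_1)$. Combined with the graph discussion this gives ``isomorphic $\Rightarrow$ equal.''

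For the converse I would use the reduction of \S 4.2. Because $S_{I_0}$ is trivial, the integral collapses to a sum of $W^U$ over a coset of the Young subgroup attached to $J_0$, and expanding $W^U$ in characters yields $\mathcal{I}^U(I_0,J_0)=\frac{1}{n!^2}\sum_{\lambda\vdash n}\frac{\chi_\lambda(1)^2}{s_\lambda(1)}\,a_\lambda(\nu_0)$, where $a_\lambda(\nu)=\sum_\tau\chi_\lambda(\tau)$ is an integer, the sum running over the relevant coset and hence determined by $\nu$. The argument then has two moving parts. The first is to show that the rational functions $\{1/s_\lambda(1)\}_{\lambda\vdash n}$ (with $s_\lambda(1)=s_\lambda(1^d)$ a polynomial in $d$) are linearly independent; granting this, equality of the two integrals forces $\chi_\lambda(1)^2 a_\lambda(\nu_0)=\chi_\lambda(1)^2 a_\lambda(\nu_1)$, and hence $a_\lambda(\nu_0)=a_\lambda(\nu_1)$ for every $\lambda$. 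The second is to recover $\nu$ from the vector $(a_\lambda(\nu))_\lambda$: since the character table $(\chi_\lambda(\rho))_{\lambda,\rho}$ is invertible, this vector is equivalent to the cycle-type distribution of the coset, from which the clique sizes — that is, the parts of $\nu$ — can be read off directly. Putting these together gives $\nu_0=\nu_1$, and hence the graphs are isomorphic.

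The hard part is the linear independence of $\{1/s_\lambda(1^d)\}$. Writing $s_\lambda(1^d)=H_\lambda^{-1}\prod_{(i,j)\in\lambda}(d+j-i)$, with $H_\lambda$ the product of hook lengths, the reciprocals all behave like $H_\lambda d^{-n}$ as $d\to\infty$, so any separation must come from their poles at the integers $d=i-j$. Expanding a hypothetical relation $\sum_\lambda b_\lambda/s_\lambda(1^d)=0$ order by order in $1/d$ and using the linear independence of the exponentials $\{e^{ct}\}_{c\in\Z}$, I would reduce the problem to the linear independence of the diagonal-length vectors $(d_c(\lambda))_{c\in\Z}$, where $d_c(\lambda)$ counts the cells of $\lambda$ of content $c$. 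I expect to establish this by a peeling/triangularity argument on the extreme diagonals: $(n)$ is the unique partition with a cell of content $n-1$ and $(1^n)$ the unique one with a cell of content $-(n-1)$, and iterating this isolation should force all $b_\lambda=0$. Making this induction precise, together with verifying that $a_\lambda(\nu)$ genuinely depends only on $\nu$, is where the real work lies.
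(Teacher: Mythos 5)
Your first two steps---identifying isomorphism of the graphs with equality of the multiplicity partitions $\nu_0,\nu_1$ of $J_0,J_1$, and deducing equality of the integrals from $\nu_0=\nu_1$ via the Remark---are exactly the paper's proof; in fact that is \emph{all} the paper's proof contains, so the converse implication is precisely where a genuine argument is needed. Within your converse, the final recovery step is sound: by Young's rule $a_\lambda(\nu)=|S_\nu|K_{\lambda\nu}$ (Kostka numbers), and unitriangularity of the Kostka matrix shows the vector $(a_\lambda(\nu))_\lambda$ determines $\nu$.

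However, the linchpin of your converse---linear independence of the rational functions $\{1/s_\lambda(1^d)\}_{\lambda\vdash n}$---is \emph{false} for $n\geq 5$, so the deduction of $a_\lambda(\nu_0)=a_\lambda(\nu_1)$ from equality of the integrals collapses. Concretely, write $P_\lambda(d)=\prod_{(i,j)\in\lambda}(d+j-i)$ and let $H_\lambda$ be the hook product, so $1/s_\lambda(1^d)=H_\lambda/P_\lambda(d)$. For the partitions $(3,2),(3,1,1),(2,2,1)$ of $5$ one has $P_{(3,2)}=d^2(d-1)(d+1)(d+2)$, $P_{(3,1,1)}=d(d-1)(d-2)(d+1)(d+2)$, $P_{(2,2,1)}=d^2(d-1)(d-2)(d+1)$; over the common denominator $D=d^2(d-2)(d-1)(d+1)(d+2)$ the reciprocals are $(d-2)/D$, $d/D$, $(d+2)/D$, whence
\begin{equation*}
\frac{1}{P_{(3,2)}(d)}-\frac{2}{P_{(3,1,1)}(d)}+\frac{1}{P_{(2,2,1)}(d)}=0
\end{equation*}
identically in $d$, i.e.\ $\frac{1}{s_{(3,2)}(1^d)}-\frac{12}{5}\cdot\frac{1}{s_{(3,1,1)}(1^d)}+\frac{1}{s_{(2,2,1)}(1^d)}=0$. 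Equality of the two integrals therefore only forces the difference of coefficient vectors into this nonzero space of relations, not to vanish. Your fallback route fares no better, for two separate reasons: the vectors $(d_c(\lambda))_c$ lie in a space of dimension $2n-1$ while there are $p(n)$ of them, so they are linearly dependent as soon as $p(n)>2n-1$ (already at $n=7$, where $p(7)=15>13$); and even where they are independent this cannot yield what you need, since the three partitions above have linearly \emph{independent} content-multiplicity vectors yet linearly \emph{dependent} reciprocals. The proposition itself may well survive---for $n=5$ the displayed relation spans all relations, its $(5)$-coordinate is zero, and the $(5)$-entry $|S_\nu|\chi_{(5)}(1)^2K_{(5)\nu}=|S_\nu|$ already separates the partitions of $5$---but proving the converse in general requires controlling the full space of relations among the $1/s_\lambda(1^d)$, which neither your proposal nor the paper's one-line proof provides.
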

	
	\begin{proof}
		The graphs are disjoint unions of complete graphs. They are isomorphic if and only if the multiplicity partitions of $J_0$ and $J_1$ are the same. After renumbering the lists, it is clear that the integrals are equal.
	\end{proof}	
	
	In the other direction, any $\ell$-partite graph $\mathcal{G}$ with at most $n$ connected components and complete connected components can have its vertices labeled by the parts of a partition $\lambda\vdash n$ so that there is a pair of lists $(I,J)$ satisfying $\mathcal{G}=\mathcal{G}(I,J).$ A similar construction can be done for the orthogonal case, as will be discussed in the next section.
	\bigskip
	
	\section{Integrals over the Orthogonal Group}
	\subsection{Normalization of Lists}
	In this section, we work with the formula
	\begin{align*}
	\I = \sum_{\substack{\sigma \in S_I, \tau \in S_J}} W^{O}(\tau^{-1}\sigma)
	\end{align*}
	In the case $\alpha = 2$, we may normalized the lists as we did in the case $\alpha = 1$. As introduced already, lists
	$$I = (i_1, \ldots, i_{2n}), J = (j_1,\ldots, j_{2n})$$
	will represent the monomial
	$$g_{I,J} = g_{i_1,j_1}\cdots g_{i_{2n},j_{2n}}$$
	We normalize these lists to consider classes of integrals. It is evident from the integral formulas that different monomials may have the same integral. Furthermore, we want to tell when two monomials have the same integral. 
	
	As in the unitary case, by the commutativity of the variables, we assume that $I$ is in the following order
	$$(i_1, \ldots, i_{\lambda_1}, i_{\lambda_1+1}, \ldots i_{\lambda_1+\lambda_2}, \ldots, i_{2n})$$
	where $(\lambda_1,\ldots, \lambda_{l})$ is a partition of ${2n}$ with each $\lambda_i$ being even, and 
	$$i_1=\cdots=i_{\lambda_1}$$
	$$i_{\lambda_1+1}=\cdots=i_{\lambda_1+\lambda_2}$$
	$$\vdots$$
	$$i_{2n - \lambda_{\ell}+1}=\cdots=i_{2n}.$$
	Again, $\lambda$ is the multiplicity partition of $I$. 
	
	Since we assume $I$ to be in such form, we cannot do the same for $J$. It is immediate that interchanging $I$ and $J$ will keep the integral invariant, so we may assume that $k$ such that $\lambda_i \ge \mu_i$ for $1\le i \le k$ is maximal, where $\lambda$ is the multiplicity partition of $I$ and $\mu$ is the multiplicity partition of $J$. 
	
	By the invariance of the Haar measure under left multiplication, in particular permutation matrices, we may assume that 
	$$(\underbrace{1,\ldots,1}_{\lambda_1},\underbrace{2,\ldots,2}_{\lambda_2},\ldots,\underbrace{\ell,\ldots,\ell}_{\lambda_\ell}).$$
	
	However, there are many permutations that do the desired work. We may consider {\it blocks} in $J$,
	$$J^1=(j_1,\ldots,j_{\lambda_1})$$
	$$J^2=(j_{\lambda_1+1},\ldots,j_{\lambda_1+\lambda_2})$$
	$$\vdots$$
	$$J^{\ell}=(j_{2n -\lambda_{\ell}+1},\ldots,j_{2n})$$
	So we apply a permutation fixing $I$ that keeps each block $J^i$ increasing. 
	
	\subsection{One-row case}
	
	In the case $\alpha = 1$, when a monomial is taken over elements in one row, or one column, there is a closed formula for the integrals using the Gamma function, [8]. In the orthogonal case, we have a recursive formula that greatly simplifies the calculation. The orthogonal Weingarten only depends on the coset-type of the argument.
	By the the Proposition $3.1$, it suffices to find the size of each coset-class in the stabilizer of $I$. Let $C(\lambda)$ denote the size of coset-class of $M_{2n}$ corresponding to a partition $\lambda$ of $n$.	
	
	Let $I = (1^{2n})$. First, we consider the most trivial case where $J = (1^2, 2^2, \ldots, n^2)$. In this case, $S_J = \{e\}$, that is, the list $J$ is fixed by only the pair partition $\{\{1,2\},\ldots, \{2n-1,2n\}\}$, which is the identity in $S_{2n}$. Hence, the integral is simply
	$$\I = \sum_{\lambda \vdash n} C(\lambda) W^O(\lambda)$$
	
	Now we compute $C(\lambda)$ for each partition $\lambda$ of $n$. Let $\mu \subset \lambda$ be a partition of $n-1$. So in terms of Young's diagram, $\mu$ is attained by removing a box from $\lambda$. Define $\mu^{\lambda}$
	
	\[ \mu^{\lambda} = \begin{cases} 
	2\mu_-m_{\mu_-}, & \text{if }\mu_- > 0\\
	1, & \text{if }\mu_- = 0
	\end{cases}
	\]  where $\mu_-$ is the part $\mu_i$ such that $\lambda_i = \mu_i + 1$ and $m_{\mu_-}$ is the multiplicity of such $\mu_i$ in $\mu$. Then
	\begin{theorem}
		For any partition $\lambda\vdash n,$ using the definition of $\mu^\lambda$ above,
		$$C(\lambda) = \sum_{\substack{\mu \subset \lambda \\ \mu \vdash n-1}} \mu^{\lambda} C(\mu)$$
	\end{theorem}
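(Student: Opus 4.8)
The plan is to interpret $C(\lambda)$ as the number of pair partitions in $M_{2n}$ of coset-type $\lambda$ and to prove the recursion by a \emph{domino-removal map} together with an exact count of its fibers. First I would fix the combinatorial model underlying the graph $\Gamma(\sigma)$: a pair partition $\pi=\{\{a_1,b_1\},\dots,\{a_n,b_n\}\}\in M_{2n}$ is a perfect matching (the ``blue'' edges) on $\{1,\dots,2n\}$, which we overlay on the fixed ``red'' matching $\{\{1,2\},\{3,4\},\dots,\{2n-1,2n\}\}$ coming from the involution defining $H_n$. The union is a disjoint union of cycles alternating red and blue, and the coset-type $\lambda$ is precisely the partition recording, for each connected component, its number of red dominoes $\{2k-1,2k\}$; this is the content of the description of $\Gamma(\sigma)$ given above, and one checks it against the two sample permutations. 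In this language $C(\lambda)$ counts blue matchings whose overlay has domino-lengths $\lambda$.

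Next I would define the removal map $\Psi$ that deletes the last domino $D_n=\{2n-1,2n\}$. Given a blue matching $\pi$ of coset-type $\lambda$, let $a$ and $b$ be the blue partners of $2n-1$ and $2n$; deleting the two vertices and reconnecting $a$ to $b$ by a blue edge yields a blue matching $\Psi(\pi)$ on $\{1,\dots,2n-2\}$. Tracing the cycle through $D_n$ shows that exactly the cycle containing $D_n$ loses one domino while every other cycle is unchanged, so $\Psi(\pi)$ has coset-type $\mu$, where $\mu$ is $\lambda$ with one box deleted (from the row whose part equals the domino-length of the cycle through $D_n$). Thus $\Psi$ is a well-defined function sending type $\lambda$ to some $\mu\subset\lambda$ with $\mu\vdash n-1$, and it partitions the set counted by $C(\lambda)$ according to the value of $\mu$.

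The heart of the argument is the fiber count: for a fixed blue matching $\sigma$ of coset-type $\mu$, I would count the type-$\lambda$ matchings $\pi$ with $\Psi(\pi)=\sigma$, i.e.\ the ways to reinsert $D_n$. If $\lambda=\mu\cup\{1\}$ (the added box starts a new row, so $\mu_-=0$), then $D_n$ must form its own $2$-cycle, forcing the blue edge $\{2n-1,2n\}$; this gives exactly one preimage, matching $\mu^{\lambda}=1$. Otherwise $\lambda$ is $\mu$ with a box appended to a row of length $\mu_-\ge 1$, so $D_n$ must be spliced into one of the $m_{\mu_-}$ cycles of domino-length $\mu_-$: one chooses such a cycle, then a blue edge $\{a,b\}$ of it, and replaces $\{a,b\}$ by one of the two detours through $D_n$ (matching $a$ to $2n-1$ and $b$ to $2n$, or $a$ to $2n$ and $b$ to $2n-1$). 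There are $\mu_-$ blue edges per cycle and $2$ orientations, and applying $\Psi$ to each reconstructed matching returns $\sigma$, so each choice is a genuine, distinct preimage; hence the fiber has size $2\mu_- m_{\mu_-}=\mu^{\lambda}$, independently of which $\sigma$ of type $\mu$ was chosen. Summing fiber sizes over all $\mu\subset\lambda$ with $\mu\vdash n-1$ yields $C(\lambda)=\sum_{\mu}\mu^{\lambda}C(\mu)$.

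The hard part will be the fiber computation, and specifically justifying the factor $2\mu_-$: one must verify that the two splicing orientations always produce distinct pair partitions, that different (cycle, edge, orientation) choices never collide, and conversely that $\pi$ determines the blue partners of $2n-1,2n$, hence the unique choice it came from — so that reinsertion is a bijection onto the fiber of $\Psi$ and that $\Psi$ and reinsertion are mutually inverse. The degenerate case $\mu_-=0$ must be separated out, since there the two ``orientations'' coincide with the single matching $\{2n-1,2n\}$ and the generic count $2\mu_- m_{\mu_-}$ correctly collapses to the stipulated value $1$. The remaining bookkeeping — translating between $\Gamma(\sigma)$ and the red/blue cycle picture, and tracking which row of $\lambda$ is modified — should be routine once the model is fixed.
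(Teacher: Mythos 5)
Your proposal is correct and takes essentially the same route as the paper's proof: both overlay the blue matching on the fixed red matching $\{\{1,2\},\ldots,\{2n-1,2n\}\}$, delete the last domino and reconnect the orphaned blue endpoints to obtain a matching of coset-type $\mu\vdash n-1$, and then count the reinsertions as $2\mu_- m_{\mu_-}$ (collapsing to $1$ when $\mu_-=0$). The only cosmetic difference is that you phrase the count as the fiber size of the removal map $\Psi$, whereas the paper phrases it as counting extensions of a type-$\mu$ graph to a type-$\lambda$ graph inside an induction on $n$.
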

	\begin{proof}
		Proceed by induction on $n$. When $n=1$, it is immediate. For each $\sigma \in M_{2n}$, expressed as pair partitions $\{\{\sigma(1),\sigma(2)\}, \ldots, \{\sigma(2n-1), \sigma(2n)\}\}$, we associate a graph $\Gamma(\sigma)$ with ``red edges" $\{(2i-1, 2i) \mid 1 \le i \le n\}$ and ``blue edges" $\{(\sigma(2i-1), \sigma(2i)) \mid 1 \le i \le n\}$. 
		
		On the other hand, consider a graph $\Gamma$ of coset-type $\lambda$, where each vertex has one red edge and one blue edge and red edges are $\{(2i-1, 2i)\mid 1 \le i \le n\}$ (Note that a blue edge and a red edge may overlap). Then, simply by reading off blue edges, we obtain a unique $\sigma \in M_{2n}$ whose $\Gamma(\sigma) = \Gamma$. This is possible by the way we defined $M_{2n}$ to be pair partitions satisfying certain order properties. Hence, there is a bijection between $M_{2n}$ and the set of all graphs of above construction.  
		
		Let $\Gamma$ be a graph with $2n$ vertices constructed as above. Remove blue edges from vertices $2n-1$ and $2n$, and connect the vertices who just lost edges with blue. Then the subgraph of the first $2n-2$ vertices has a coset-type $\mu\vdash n-1$. Then to extend $\Gamma$ to a graph of coset-type $\lambda$, where $\mu \subset \lambda$, we choose a connected component $C$ with $2\mu_-$ vertices ($\mu_-$ is exactly the part where $\mu_i +1 = \lambda_i$). If $\mu_- = 0$, then there is only one way to extend to $\lambda$, by adding the blue edge $(2n-1, 2n)$. Otherwise, we can obtain $\lambda$ by removing a blue edge from $C$ and adding two blue edges from two vertices who just lost a blue edge to $2n-1$ and $2n$. There are $\mu_-$ blue edges from $C$, and two ways to add edges, giving us $2\mu_-$ choices. There are $m_{\mu_-}$ connected components of $\mu_-$ vertices, so we indeed get $2\mu_-m_{\mu_-}$ ways to extend to $\lambda$ from $\mu$ when $\mu_- > 0$. Furthermore, we established the bijection between graphs and $M_{2n}$ and the fact that $M_{2(n-1)} \hookrightarrow M_{2n}$. Hence by induction, we have the desired formula. 
	\end{proof}
	
	We express this recursive formula in terms of a weighted Young's lattice, 
	\begin{center}
		\begin{tikzpicture}[shorten >=1pt, auto, node distance=3cm]
		\begin{scope}[every node/.style={}]
		\node (v1) at (0,0) {$\stackrel{1}{\ydiagram{1}}$};	
		\node (v2) at (-1,-1.5) {$\stackrel{2}{\ydiagram{2}}$};
		\node (v3) at (1,-1.5) {$\stackrel{1}{\ydiagram{1,1}}$};
		\node (v4) at (-3,-3) {$\stackrel{8}{\ydiagram{3}}$};
		\node (v5) at (0,-3.2) {$\stackrel{6}{\ydiagram{2,1}}$};
		\node (v6) at (3,-3.2) {$\stackrel{1}{\ydiagram{1,1,1}}$};
		\node (v7) at (-5,-4.8) {$\stackrel{48}{\ydiagram{4}}$};
		\node (v8) at (-2,-5) {$\stackrel{32}{\ydiagram{3,1}}$};
		\node (v9) at (-0,-5) {$\stackrel{12}{\ydiagram{2,2}}$};
		\node (v10) at (2,-5.1) {$\stackrel{12}{\ydiagram{2,1,1}}$};
		\node (v11) at (5,-5) {$\stackrel{1}{\ydiagram{1,1,1,1}}$};
		\end{scope}
		
		\begin{scope}[every edge/.style={draw=black}]
		\draw  (v1) edge node[right, pos=.5]{2} (v2);
		\draw  (v1) edge node[right, pos=.5]{1} (v3);
		\draw  (v2) edge node[right, pos=.5]{4} (v4);
		\draw  (v2) edge node[right, pos=.5]{1} (v5);
		\draw  (v3) edge node[right, pos=.5]{4} (v5);
		\draw  (v3) edge node[right, pos=.5]{1} (v6);
		\draw  (v4) edge node[right, pos=.5]{6} (v7);
		\draw  (v4) edge node[right, pos=.5]{1} (v8);
		\draw  (v5) edge node[right, pos=.5]{4} (v8);
		\draw  (v5) edge node[right, pos=.5]{2} (v9);
		\draw  (v5) edge node[right, pos=.5]{1} (v10);
		\draw  (v6) edge node[right, pos=.5]{6} (v10);
		\draw  (v6) edge node[right, pos=.5]{1} (v11);
		
		\end{scope}
		\end{tikzpicture}
	\end{center}
	
	From this lattice, we obtain $C(\lambda)$ by summing the weight of paths to $\lambda$. 
	
	\begin{remark}
		We observed that for each partition $\lambda$ of $n$, in fact $C(\lambda)$ equals the zonal character, the 	coefficient of $p_{\lambda}$ in power sum expansion of zonal polynomial $Z_{(n)}$.
	\end{remark}
	
	Now given the trivial case, we may consider arbitrary $J$. Since $I$ is fixed to be $(1^{2n})$, we may permute $J$ in any way. So we normalize $J$ in the way that we normalize the list $I$. Then 	
	\begin{theorem}
		Let $\mu$ be the multiplicity partition of $J$. Then 
		$$\I= (\prod_{i=1}^{\ell} (\mu_i-1)!!)\sum_{\lambda \vdash n} C(\lambda) W^O(\lambda)$$
	\end{theorem}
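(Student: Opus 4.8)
The plan is to reduce everything to the trivial case $J=(1^2,2^2,\dots,n^2)$ treated just above, by exploiting that the choice $I=(1^{2n})$ imposes no constraint on the $\sigma$-summation while $J$ restricts the $\tau$-summation to a set whose cardinality is exactly the asserted double-factorial product.

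First I would record the two ``stabilizer'' computations. Since $I=(1^{2n})$, we have $I\cdot\sigma=(1^{2n})$ for every pair partition $\sigma\in M_{2n}$, so $\delta_I(\sigma)=1$ identically; hence the admissible $\sigma$ in the integral formula range over all of $M_{2n}$. For the normalized list $J$, the condition $\delta_J(\tau)=1$ holds exactly when the pair partition $\tau$ matches positions carrying equal entries of $J$. Grouping the $2n$ positions into the $\ell$ blocks of equal value (of sizes $\mu_1,\dots,\mu_\ell$), an admissible $\tau$ is precisely a choice of a perfect matching inside each block, and the number of perfect matchings of a block of size $\mu_i$ is $(\mu_i-1)!!$. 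Thus the admissible $\tau$ number $\prod_{i=1}^{\ell}(\mu_i-1)!!$; in particular this count, and the integral, vanish unless every $\mu_i$ is even, consistent with the double-factorial convention.

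The key step is to show that, for each fixed admissible $\tau$, the inner sum $\sum_{\sigma\in M_{2n}}W^O(\tau^{-1}\sigma)$ is independent of $\tau$ and equals $\sum_{\lambda\vdash n}C(\lambda)W^O(\lambda)$. Here I would use that $W^O$ is a function of the coset-type alone, i.e.\ it is constant on each double coset $H_n g H_n$, together with the observation that left multiplication by $\tau^{-1}$ permutes the left cosets of $H_n$ in $S_{2n}$; consequently $\tau^{-1}M_{2n}=\{\tau^{-1}\sigma:\sigma\in M_{2n}\}$ is again a complete set of left-coset representatives. Since any left transversal contains exactly one element of each left coset, it meets the double coset of coset-type $\lambda$ in exactly as many elements as there are left cosets inside that double coset, a number independent of the transversal; for the transversal $M_{2n}$ this number is by definition $C(\lambda)$. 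Grouping $\sum_{\sigma\in M_{2n}}W^O(\tau^{-1}\sigma)=\sum_{g\in\tau^{-1}M_{2n}}W^O(g)$ by coset-type then yields $\sum_{\lambda\vdash n}C(\lambda)W^O(\lambda)$, with no dependence on $\tau$.

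Finally I would assemble the pieces: writing the integral as a double sum over admissible $\sigma$ and $\tau$ and factoring out the $\tau$-independent inner sum gives
$$\I=\sum_{\tau:\delta_J(\tau)=1}\ \sum_{\sigma\in M_{2n}}W^O(\tau^{-1}\sigma)=\Big(\prod_{i=1}^{\ell}(\mu_i-1)!!\Big)\sum_{\lambda\vdash n}C(\lambda)W^O(\lambda).$$
The main obstacle is the transversal-invariance claim in the key step; everything else is bookkeeping with the definitions of $\delta_I$, $\delta_J$ and the matching count. I expect the subtlety there to be purely group-theoretic, namely that a double coset decomposes into equally many left cosets regardless of the chosen transversal, and it is exactly where the hypothesis that $W^O$ depends only on the coset-type is essential.
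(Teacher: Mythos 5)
Your proposal is correct and follows essentially the same route as the paper's proof: both rest on the observations that $\delta_I\equiv 1$ for $I=(1^{2n})$, that the admissible $\tau$ number $\prod_{i=1}^{\ell}(\mu_i-1)!!$, and that $\tau^{-1}M_{2n}$ remains a complete set of left coset representatives of $S_{2n}/H_n$, so that grouping by coset-type reproduces $\sum_{\lambda\vdash n}C(\lambda)W^O(\lambda)$ independently of $\tau$. Your write-up is in fact more careful than the paper's, since you make explicit the step the paper leaves implicit — that coset-type is constant on left cosets of $H_n$, so every left transversal meets the double coset of type $\lambda$ in exactly $C(\lambda)$ elements.
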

	\begin{proof}		
		Recall that $M_{2n}$ is a complete set of representatives of $S_{2n}/ H_n$ and that two permutations $\sigma, \tau \in S_{2n}$ have the same coset-type if and only if $\sigma \in H_n\tau H_n$. Left multiplication is a transitive action on $S_{2n}$, so $\tau^{-1} M_{2n}$ is still a complete set of representatives of $S_{2n}/H_n$. 
		
		Weingarten function is taken over all $\tau^{-1}M_{2n}$ such that $\tau \in S_J$. The number of $\tau \in S_J$ is $\prod_{i=1}^{\ell} (\mu_i -1)!!$, so the formula follows. 
	\end{proof}
	
	\subsection{Independent Blocks}
	
	In general, we may consider blocks in each list $I$ and $J$. Assume blocks are given by $\mu$. If integers in $J^i$ do not appear in any other blocks, then $J^i$ is {\it independent} in that $\tau \in M_{2n}$ fixing $J^i$ do not act on other blocks at all. Suppose all $J^i$'s are independent. Then for $\sigma \in M_{2n}$ fixing $I$ and $\tau \in M_{2n}$ fixing $J$, we can write 
	$$\sigma = \sigma_1 \cdots \sigma_{\ell}, \quad \tau = \tau_1 \cdots \tau_{\ell}$$
	where $\sigma_i$ and $\tau_i$ act on the $i$th block. It is immediate that all $\sigma_i$'s are disjoint and $\tau_i$'s are also disjoint. Hence, $\tau^{-1}\sigma = (\tau_1^{-1}\sigma_{1}) \cdots (\tau_{\ell}^{-1}\sigma_{\ell})$. Clearly, each $\tau_{i}^{-1}\sigma_{i}$ yields a coset-type corresponding to a partition of $\mu_i/2$. 
	
	Let $C_{\mu_i}(\lambda)$ be the size of coset-class in $M_{\mu_i}$ corresponding to $\lambda$. For each $J^i$, let $m_{i,j}$ be the multiplicity of $j$. Then from the above argument and Theorem 5.3, we get
	\begin{proposition}
		In the above setup, 
		\begin{align*}
		\I = \prod_{i,j}(m_{i,j}-1)!! \sum_{\substack{\lambda_i \vdash \mu_i/2, \\ \lambda = \lambda_1 \cup \cdots \cup \lambda_{\ell}}}  C_{\mu_1}(\lambda_1)\cdots C_{\mu_{\ell}}(\lambda_{\ell}) W^O (\lambda)
		\end{align*}
	\end{proposition}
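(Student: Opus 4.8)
The plan is to start from the block-wise formula $\I=\sum_{\sigma\in S_I,\ \tau\in S_J}W^{O}(\tau^{-1}\sigma)$ of Proposition 3.1 and use independence to collapse the double sum into a product of per-block contributions, in the same spirit as Theorem 5.4. First I would record the two structural consequences of independence already isolated before the statement. Since $I$ is constant on each block, a pairing $\sigma$ fixes $I$ exactly when each of its pairs lies inside a single block, so that $S_I$ is identified with the product of the within-block pairing sets and $\sigma=\sigma_1\cdots\sigma_\ell$; and since by independence two equal entries of $J$ never straddle distinct blocks, the same holds for $\tau\in S_J$, giving $\tau=\tau_1\cdots\tau_\ell$ with $\tau_i$ a pairing of block $i$ fixing $J^i$. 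As the supports are disjoint, $\tau^{-1}\sigma=(\tau_1^{-1}\sigma_1)\cdots(\tau_\ell^{-1}\sigma_\ell)$.

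Next I would verify that coset-type is additive across this decomposition: the graph $\Gamma(\tau^{-1}\sigma)$ is the disjoint union of the graphs $\Gamma(\tau_i^{-1}\sigma_i)$ on the separate blocks, so its connected components are precisely those of the pieces, and the coset-type of $\tau^{-1}\sigma$ is $\lambda=\lambda_1\cup\cdots\cup\lambda_\ell$, where $\lambda_i\vdash\mu_i/2$ is the coset-type of $\tau_i^{-1}\sigma_i$ in $M_{\mu_i}$. Grouping the sum according to the tuple $(\lambda_1,\ldots,\lambda_\ell)$ then gives
$$\I=\sum_{\lambda_i\vdash\mu_i/2}N(\lambda_1,\ldots,\lambda_\ell)\,W^{O}(\lambda_1\cup\cdots\cup\lambda_\ell),$$
where $N(\lambda_1,\ldots,\lambda_\ell)$ counts the pairs $(\sigma,\tau)\in S_I\times S_J$ for which $\tau_i^{-1}\sigma_i$ has coset-type $\lambda_i$ for every $i$. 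The crucial observation is that even though $W^O$ of a union is not a product of Weingarten values, the multiplicity $N$ does factor: because the choices in distinct blocks are independent, $N(\lambda_1,\ldots,\lambda_\ell)=\prod_i N_i(\lambda_i)$, with $N_i(\lambda_i)$ the number of pairs $(\sigma_i,\tau_i)$ in block $i$ with $\tau_i^{-1}\sigma_i$ of coset-type $\lambda_i$.

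The remaining step is to evaluate $N_i(\lambda_i)$, which I would do exactly as in the proof of Theorem 5.4 but restricted to a single block. For fixed $\tau_i$ fixing $J^i$, the set $\tau_i^{-1}M_{\mu_i}$ is again a complete set of left-coset representatives of $H_{\mu_i/2}$ in $S_{\mu_i}$; since the number of left cosets inside a given double coset is an invariant of that double coset, and $M_{\mu_i}$ already contains $C_{\mu_i}(\lambda_i)$ elements of coset-type $\lambda_i$, the number of $\sigma_i\in M_{\mu_i}$ with $\tau_i^{-1}\sigma_i$ of coset-type $\lambda_i$ equals $C_{\mu_i}(\lambda_i)$, independently of $\tau_i$. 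Summing over $\tau_i$ and counting the pairings of $J^i$ that fix it, which is $\prod_j(m_{i,j}-1)!!$, yields $N_i(\lambda_i)=(\prod_j(m_{i,j}-1)!!)\,C_{\mu_i}(\lambda_i)$. Multiplying over the blocks and substituting into the regrouped sum produces the stated identity.

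I expect the genuine content to lie in the two invariance facts rather than in any computation: the additivity of coset-type under the disjoint block decomposition, which relies on the graph description of $\rho$, and the $\tau_i$-independence of the count $C_{\mu_i}(\lambda_i)$, which transports the double-coset counting argument of Theorem 5.4 from $M_{2n}$ down to each $M_{\mu_i}$. The main obstacle is therefore careful bookkeeping: independence must be invoked twice, once to split $S_J$ into the within-block pairing groups and once to factor the multiplicity $N$, and one must check that restricting to a block genuinely reproduces the $S_{\mu_i},H_{\mu_i/2},M_{\mu_i}$ setup so that $C_{\mu_i}$ is the correct local count.
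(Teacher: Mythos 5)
Your proof is correct and takes essentially the same route as the paper: the paper's own (very terse) justification is exactly the block decomposition $\sigma=\sigma_1\cdots\sigma_\ell$, $\tau=\tau_1\cdots\tau_\ell$ with $\tau^{-1}\sigma=(\tau_1^{-1}\sigma_1)\cdots(\tau_\ell^{-1}\sigma_\ell)$, coset-type additivity across blocks, and an appeal to the left-coset-representative counting argument of Theorem 5.3 applied within each block. Your write-up merely makes explicit the steps the paper leaves implicit (grouping by tuples $(\lambda_1,\ldots,\lambda_\ell)$, factoring the multiplicity $N$, and the $\tau_i$-independence of the count), all of which are sound.
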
\qed 
	
	\subsection{List Reduction Algorithm}
	Let $C_{I,J}(\lambda)$ denote the number of $\tau^{-1}\sigma$ of coset-type $\lambda$ taken by Weingarten. In case where $J = (1^2,2^2,\ldots,n^2)$, we are computing the coset-classes of the subset of $M_{2n}$ stabilizing $I$. So let $C_{I}(\lambda)$ denote the size of coset-class in $S_I$ corresponding to $\lambda$. ($I$ is not necessarily normalized here.) We start with an example which directly explains the idea and the motivation. 
	
	Consider the monomial given by the lists 
	$$I = (1,1,1,1,1,1,2,2), J = (1,1,1,1,1,2,1,2).$$ 
	It is clear that the issue with this monomial is that $M_{6}\times M_{2}$ is embedded in $S_{8}$ differently for $I$ and $J$. A brute-force calculation will lead to computing the coset type of $225$ permutations, involving conversions from pair partitions to permutations. Consider $\tau \in M_{8}$ fixing the list $J$. As a pair partition, $\tau$ must have $\{6,8\}$. At this point, $\tau$ is determined by a pair-partition of $\{1,2,3,4,5,7\}$. Instead, we may consider
	\begin{align*}
	J_1 &= (3,1,1,1,1,2,3,2), J_2 = (1,3,1,1,1,2,3,2), J_3 = (1,1,3,1,1,2,3,2),\\
	J_4 &= (1,1,1,3,1,2,3,2), J_5 = (1,1,1,1,3,2,3,2)
	\end{align*}
	which represent all possible pairings for the index $7$. It is clear that the lists $J_i$'s are equivalent up to permutation that fixes $I$. Hence, $C_{I,J_i}(\lambda)$ is the same for all $J_i$ and it is enough to consider one $J_i$. 
	
	Applying the same argument recursively, we conclude that 
	$C_{I,J}(\lambda) = 5\cdot3\cdot C_{I,J'}(\lambda)$, where $J' = (1,1,4,4,3,2,3,2) \approx (1,1,2,2,3,4,3,4)$. Permuting $J'$ and $I$ once again, we reduced the problem to consider 
	$$I' = (1,1,1,1,1,2,1,2), J' = (1,1,2,2,3,3,4,4)$$
	which is simply computing the size of each coset-class in $S_{I'}$. So we have
	$$\I = 15 \cdot \Ic{I'}{J'} = 15\sum_{\lambda \vdash n} C_{I'}(\lambda) W^O(\lambda),$$
	where we only need to calculate the coset-type of $15$ permutations. 
	
	We develop an algorithm for general cases. The above example is particularly nice in that we reduced the problem to computing the coset-classes of one $I'$, but in general, we may need to compute for multiple $I'$'s. In certain cases, such as the above example or the one row case, we know what the reduction is explicitly. 
	
	Let $I$ and $J$ be normalized as in section 5.1 with blocks given by $\mu$. Let $m_j$ be the multiplicity of $j$ in $J$. We define an equivalence class on $M_{m_j}$, pair partitions fixing $j$ in $J$. By multiplying permutation matrices, we have an obvious action of $S_{2n}$ on $M_{2n}$ by permuting indices. 
	\begin{definition}
		Let $I,J$ be normalized lists with the multiplicity partition $\mu$. Let $\sigma, \tau \in M_{m_j}$. We say that $\sigma$ and $\tau$ are equivalent if $\sigma$ and $\tau$, as pair partitions, are the same up to a permutation fixing $I$. 
	\end{definition}
	For each $j \in J$, we determine representatives of $M_{2m_j}$ and the size of their class. Let $K_j$ be the list of indices of $j$ in $J$. The equivalence relation is defined by permutations fixing $I$, so all $j$'s in $J_i$ are equivalent. So relabel every $k \in K_j$ by $i$ if $k$ is an index of $j \in J_i$. Now all we need to do is to determine distinct pair partitions of $K_j$ and count their multiplicities. Let $P_j$ be the set of distinct pair partitions of $K_j$. Note that elements in $K_j$ are not actually elements of $M_{2m_j}$ because of the relabeling.
	
	For $\sigma_j \in P_j$, the multiplicity of $\sigma_j$, denoted $m_{\sigma_j}$, can be computed iteratively in the following way. Start by letting $m_{\sigma_j} = 1$. Pick a pair $p = \{p_1,p_2\}$ from $\sigma_j$. We have three cases:
	\begin{enumerate}
		\item $p_1 = p_2$ and there is no more $p_1$ in $\sigma_j$ 
		
		Then we leave $m_{\sigma_j}$ as it is. 
		
		\item $p_1 = p_2$ and there is more $p_1$ in $\sigma_j$ 
		
		Let $m_{p_1}$ denote the multiplicity of $p_1$ in $K_j$. Then $m_{\sigma_j} = \frac{1}{2} m_{\sigma_j}m_{p_1}(m_{p_1}-1)$
		
		\item $p_1 \ne p_2$. 
		Let $m_{p_i}$ denote the multiplicity of $p_i$ in $K_j$. Then $m_{\sigma_j} = m_{\sigma_j}m_{p_1}m_{p_2}$
	\end{enumerate}
	Then we remove $p$ from $\sigma_j$ and $K_j$ and iterate this procedure.  
	
	Iterate this procedure for each $j \in J$. Then by defining $P$ to be the Cartesian product of $P_j$'s, we obtain representatives of equivalent classes we defined on full $M_{2n}$. The multiplicity of each representative is defined naturally by taking the product. There may be some redundancy, so we may further reduce $P$, and properly adjust multiplicities. For each $\sigma \in P$, let $m_{\sigma}$ denote the size of the equivalence class represented by $\sigma$. As we noted above, elements of $P$ are not pair partitions of $\{1,\ldots 2n\}$. So replace each $\sigma \in P$ by an arbitrary pair partition corresponding to $\sigma$. Each pair partition corresponds to a unique permutation of the list $(1,1,2,2,\ldots,n,n)$. For each $\sigma \in P$, define $J_{\sigma}$ to be the corresponding list. Apply some permutation of $S_{2n}$ to revert $J_{\sigma}$ in ascending order and apply the same permutation to $I$ to attain $I_{\sigma}$. Now $J_{\sigma} = (1^2,2^2,\ldots,n^2)$, with $S_{J_{\sigma}} = \{e\}$, so we obtain the formula
	
	\begin{proposition}
		In the above setup,
		$$C_{I,J}(\lambda) = \sum_{\sigma \in P} m_{\sigma}C_{I_{\sigma}} (\lambda).$$
	\end{proposition}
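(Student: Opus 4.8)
The plan is to reorganize the defining double sum for $C_{I,J}(\lambda)$, which counts pairs $(\sigma,\tau)$ with $\sigma\in S_I$ and $\tau\in S_J$ such that $\tau^{-1}\sigma$ has coset-type $\lambda$, by grouping the $\tau$'s into the equivalence classes defined above and showing that each class contributes a fixed multiple of a single-coset count $C_{I_\sigma}(\lambda)$. Thus I would start from
$$C_{I,J}(\lambda)=\sum_{\tau\in S_J}\#\{\sigma\in S_I:\rho(\tau^{-1}\sigma)=\lambda\},$$
and the first step is to prove that the inner count depends only on the equivalence class of $\tau$. If $\tau'=\pi\cdot\tau$ for a permutation $\pi$ fixing $I$ (the action being relabeling of indices, as with left multiplication by permutation matrices), then $\sigma\mapsto\pi\cdot\sigma$ is a bijection of $S_I$ onto itself, since fixing $I$ is preserved. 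The coset-type of $\tau^{-1}\sigma$ is read off from the graph obtained by overlaying the pair partitions $\sigma$ and $\tau$ (the red/blue edge picture from the proof of Theorem 5.3), and relabeling all $2n$ indices by $\pi$ leaves the isomorphism type of this graph, hence the coset-type, unchanged. Therefore $\rho((\pi\tau)^{-1}(\pi\sigma))=\rho(\tau^{-1}\sigma)$, and the bijection $\sigma\mapsto\pi\sigma$ matches the $\lambda$-count for $\tau$ with the one for $\tau'$.

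Next I would identify the classes and their sizes. Because any $\tau\in S_J$ must pair indices carrying equal $J$-values, $S_J$ factors as the product, over the distinct values $j$, of the sets of pair partitions of the index set $K_j$; the equivalence generated by $I$-fixing permutations acts within each factor, permuting inside each $I$-block the indices of $K_j$. After relabeling each index of $K_j$ by its $I$-block, the classes in a single factor are exactly the distinct label-pair-partitions $P_j$, and the class of a product is the product of the classes, giving $P=\prod_j P_j$ with size $m_\sigma=\prod_j m_{\sigma_j}$. One then checks that the three-case iteration computes $m_{\sigma_j}$ correctly, since at each step it counts the number of choices of actual indices realizing a prescribed label pair, namely $m_{p_1}m_{p_2}$ for distinct labels and $\tfrac12 m_{p_1}(m_{p_1}-1)$ for a repeated label. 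The one point requiring care is that $I$-fixing permutations swapping equal $I$-blocks can identify distinct elements of the Cartesian product; this is precisely the announced redundancy reduction, after which $P$ indexes genuine classes and $m_\sigma$ records their true sizes.

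Finally, for a chosen representative $\tau_\sigma$ of each class I would change coordinates. Reordering the $2n$ tensor factors by the permutation that sends the realizing list $J_\sigma$ to $(1^2,\ldots,n^2)$ leaves the integral, and hence every coset-type count, unchanged by commutativity of the variables; it sends $\tau_\sigma$ to the standard pair partition $e$ and sends $I$ to $I_\sigma$. Since $S_{J_\sigma}=\{e\}$, the inner count becomes $\#\{\eta\in S_{I_\sigma}:\rho(\eta)=\lambda\}=C_{I_\sigma}(\lambda)$, and summing over classes yields $C_{I,J}(\lambda)=\sum_{\sigma\in P}m_\sigma C_{I_\sigma}(\lambda)$. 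I expect the main obstacle to be the first step: verifying cleanly, via the overlay-graph description of coset-type together with the bijection on $S_I$, that the $\lambda$-count is genuinely an invariant of the equivalence class, and then confirming that the iterative multiplicities combined with the redundancy reduction reproduce exactly the class sizes $m_\sigma$.
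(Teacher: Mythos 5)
Your core argument is correct and is essentially the paper's own: the paper states this proposition without a formal proof, the justification being precisely the construction preceding it, which you have reconstructed --- partition $S_J$ into classes under relabeling by $I$-fixing permutations, show that the count $\#\{\sigma\in S_I:\rho(\tau^{-1}\sigma)=\lambda\}$ is constant on classes (your overlay-graph/bijection step; note the red/blue graph device is from the paper's proof of Theorem 5.1, not 5.3), and apply a global relabeling carrying a class representative to the identity matching so that the inner count becomes $C_{I_\sigma}(\lambda)$. Your argument is also robust to how the redundancy is handled: the fibers of the label-data map partition $S_J$, each fiber lies in a single orbit (the per-value bijections of the disjoint sets $K_j$ glue to one $I$-fixing permutation), so the formula already holds for the unreduced Cartesian product, and merging fibers that lie in a common orbit preserves it because their values $C_{I_\sigma}(\lambda)$ coincide.

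Two peripheral claims in your write-up are wrong, though neither affects the displayed formula, since the proposition takes $m_\sigma$ to be, by definition, the size of the class. First, the redundancy is not caused by ``$I$-fixing permutations swapping equal $I$-blocks'': no $I$-fixing permutation can swap blocks, as distinct blocks carry distinct values. It is caused by $I$-fixing permutations that permute the sets $K_j$ attached to different $J$-values having the same label pattern; for instance, with $I=(1,1,1,1,2,2,2,2)$ and $J=(1,1,2,2,1,1,2,2)$, the $I$-fixing permutation $(1\,3)(2\,4)(5\,7)(6\,8)$ swaps $K_1$ and $K_2$ and identifies the product classes $(\{\{1,1\},\{2,2\}\},\{\{1,2\},\{1,2\}\})$ and $(\{\{1,2\},\{1,2\}\},\{\{1,1\},\{2,2\}\})$. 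Second, the three-case iteration does not compute $m_{\sigma_j}$ correctly as stated (this is a defect of the paper's description as well, not just of your reading): it overcounts by $k!$ for every label pair repeated $k$ times. For labels $1,1,2,2$ and the class $\{\{1,2\},\{1,2\}\}$ it outputs $4$, while the true class size is $2$; in the paper's Example 5.7 the class $\{\{1,2\},\{1,2\},\{2,3\}\}$ is correctly assigned multiplicity $6$, but the iteration as written outputs $12$. So in your proof you should define $m_{\sigma_j}$ directly as the number of matchings of $K_j$ realizing the given label data --- which is all your argument needs --- rather than as the output of that iteration, or else correct the iteration by dividing by the factorials of the multiplicities of repeated label pairs.
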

	
	\begin{example}
		Let $I = (1,1,1,1,2,2,2,2,3,3)$, and $J = (1,1,2,2,1,1,1,2,1,2)$. 
		
		First, consider $1 \in J$. Compute all pair-partitions of indices of $1$ in $J$, that is, pair-partitions of $(1,2,5,6,7,9)$. Then by mapping $1,2,$ to $1$, and $5,6,7$ to $2$, and $9$ to $3$, we get three distinct classes represented by  
		$$\{\{1,1\},\{2,2\},\{2,3\}\}, \{\{1,2\},\{1,2\},\{2,3\}\},  \{\{1,2\},\{1,3\},\{2,2\}\}$$
		with multiplicities $3,6,$ and $6$
		
		Similarly for $2 \in J$, we get
		$$\{\{1,1\},\{2,3\}\}, \{\{1,2\},\{1,3\}\}$$
		with multiplicities $1,$ and $2$
		
		By taking the Cartesian product and reducing to the distinct classes, we get six representatives of $M_{2n}$,
		\begin{align*}
		I_{\sigma_1} = (1, 1, 4, 4, 2, 2, 3, 5, 3, 5), \quad m_{\sigma_{1}} = 3 \\
		I_{\sigma_2} = (1, 1, 4, 5, 2, 2, 3, 4, 3, 5), \quad m_{\sigma_2} = 12\\
		I_{\sigma_3} = (1, 2, 4, 4, 1, 2, 3, 5, 3, 5), \quad m_{\sigma_3} = 6\\
		I_{\sigma_4} = (1, 2, 4, 5, 1, 2, 3, 4, 3, 5), \quad m_{\sigma_4} = 12\\
		I_{\sigma_5} = (1, 2, 4, 5, 1, 3, 3, 4, 2, 5), \quad m_{\sigma_5} = 12
		\end{align*}
		
		So the size of each coset-class is 
		\begin{table}[htb]
			\centering
			\begin{tabular}{rrrlrrr}
				& \quad $I_{\sigma_1}$ & $I_{\sigma_2}$ & \quad $I_{\sigma_3}$ & $I_{\sigma_4}$ & $I_{\sigma_5}$ & $I,J$  \\ 
				(5)	:	& 	0 &	 +\quad 48 & +\quad	24 &	+\quad 72 & +\quad	48& =\quad192 \\ 
				(4,1) :	& 	0 &	  +\quad48& +\quad	 12& +\quad	0 & +\quad	24 & =\quad84 \\ 
				(3,2) :		&  12 &  +\quad0& 	 +\quad12& +\quad	36 & +\quad	24 &  =\quad84 \\ 
				(3,1,1) :	& 	 6&	  +\quad12&  +\quad	0& +\quad 	0&	 +\quad	0&  =\quad18 \\ 
				(2,2,1) :	&  	6& 	 +\quad0&  +\quad	6& 	+\quad	 0& +\quad	12&  =\quad24 \\ 
				(2,1,1,1) :	&  	3& 	 +\quad0&  +\quad	0&  +\quad	0& 	+\quad	0& =\quad 3 \\ 
				(1,1,1,1,1) : &  0& +\quad0&  +\quad	0& 	+\quad 	0&  +\quad	0& =\quad 0 \\ 
			\end{tabular}
		\end{table}
	\end{example}
	
	\begin{remark}
		Here we used the permutations fixing $I$ to reduce the problem. But there are other permutations that may help reducing the problem. For example, permutations that permute blocks of the same size in $I$ are essentially just renormalizing $I$, so we must have some invariance here.  
	\end{remark}
	
	\subsection{Graph Presentation of Lists}
	Given $I,J$, where $J$ is completely reduced to $(1^2,\ldots, n^2)$, we may consider $J$ to be normalized as in section 5.1 and consider blocks of size $2$ in $I$. From the results from this section, it is clear that what determines the integral is the number of blocks $I_i = (\alpha, \beta)$, where $\alpha \ne \beta$ and their multiplicities in $I$. 
	
	Let $\lambda = (\lambda_1,\ldots,\lambda_{\ell})$ be the multiplicity partition of $I$. We associate $I$ with the graph $\mathcal{G}(I)$ of $\ell$ vertices with weighted edges, where the edge between $\alpha,\beta$ has weight $N$ if there are $N$ blocks $I_i = (\alpha, \beta)$. Viewing $\lambda$ as a list, $S_{\ell}$ acts on $\lambda$ by permuting indices, and similarly on $\mathcal{G}(I)$. For two lists $I,I'$, we define $\mathcal{G}(I)$ and $\mathcal{G}(I')$ to be equivalent if they are the same up to a permutation fixing $\lambda$. Such a permutation is simply relabeling integers with the same multiplicities, so  we observe that the lists $I$ and $I'$ produce the same integral with the above $J$ if the associated graphs $\mathcal{G}(I)$ and $\mathcal{G}(I')$ are equivalent. 
	
	\section{Program Manual}
	
	In computing partitions, and zonal characters, we used Stembridge's {\tt SF2.4v} package. Below are basic functions used in the {\tt IntHaar} package in {\tt MAPLE}. 
	
	\medskip
	{\bf Helper Functions}:
	
	[{\tt zl($\lambda$::partition)}]: Given partition $\lambda$ of $n$, returns $z_{\lambda}$.
	
	[{\tt CycleType($\sigma$::disjcyc, n)}]:  Given $\sigma \in S_n$, returns its cycle-type. 
	
	[{\tt CosetType($\sigma$::pair partition)}]: Given $\sigma \in M_{2n}$ , returns its coset-type. 
	
	\medskip
	{\bf Unitary Case}:
	
	[{\tt SchurAtOne($\lambda$::partition)}]: Computes the $Z^{(1)}_{\lambda}(1)$.
	
	[{\tt WeingartenU($\lambda$::partition, n)}]: Computes $W^{U}(\lambda, d)$.
	
	[{\tt GetPermU(I, n)}]: Returns all $\sigma \in S_I$. 
	
	[{\tt IUL(I,J)}]: Computes the integral of the monomial in $d$. 
	
	\medskip 
	{\bf Orthogonal Case}
	
	[{\tt ZonalAtOne($\lambda$::partition)}]: Computes $Z^{(2)}_{\lambda}(1)$.
	
	[{\tt ZonalCharacter($\lambda$, $\mu$::partition)}]: Computes $\omega^{\lambda}(\mu)$.
	
	[{\tt GetPermO(I)}]: Returns all $\sigma \in S_I$ 
	
	[{\tt WeingartenO($\lambda$, n)}]: Computes $W^O(\lambda, d)$.
	
	[{\tt IOL(I, J)}] 
	
	\medskip
	{\bf Symplectic Case}
	
	[{\tt GetPermSp(I, n)}]: Returns all $\sigma \in S_I$. 
	
	[{\tt WgSign($\sigma$, $\tau$, n)}]: Computes $(-1)^{neg}$.
	
	[{\tt ISpL(I, J)}]:
	
	\medskip 
	{\bf IntHaar}:
	
	[{\tt IntU(f::polynomial function over $U(d)$)}]
	
	[{\tt IntO(f::polynomial function over $O(d)$)}]
	
	[{\tt IntSp(f::polynomial function over $Sp(2d)$, d)}]
	\bigskip
	
	There are a lot more functions used as helper functions, or for optimization.

	\section{Appendix}
	In the case $\alpha = 2$, we give values of $C_I(\lambda)$ for $n = 10$. In this particular example the computation of each line is considerably less than half a second. For $C_I(\lambda)$ which take more than one second on {\tt MAPLE}, we will provide tables on our website. 
	
	\begin{table}[htb]
		\centering
		\caption{n = 10}
		\bigskip 
		\begin{tabular}{r|rrlrrrr}
			& $(5)$ & $(4,1)$ & $(3,2)$ & $(3,1,1)$ & $(2,2,1)$ & $(2,1,1,1)$ & $(1,1,1,1,1)$ \\ \hline
			$(1,1,1,1,1,1,1,1,1,1)$	& 380 & 240 & 160 & 80 & 60 & 20 & 1 \\
			$(1,1,1,1,1,1,1,1,2,2)$	& 0 & 48 & 0 & 32 & 12 & 12 & 1 \\
			$(1,1,1,1,1,1,1,2,1,2)$	& 48 & 24 & 20 & 6 & 6 & 1 & 0 \\
			$(1,1,1,1,1,1,2,2,2,2)$  & 0 & 0 & 16 & 8 & 12 & 8 & 1 \\
			$(1,1,1,1,1,2,1,2,2,2)$	& 16 & 16 & 4 & 6 & 2 & 1 & 0 \\
			$(1,1,1,2,1,2,1,2,1,2)$  & 24 & 6 & 12 & 0 & 3 & 0 & 0 \\
			$(1,1,1,1,1,1,2,2,3,3)$  & 0 & 0 & 0 & 8 & 0 & 6 & 1 \\
			$(1,1,1,1,1,1,2,3,2,3)$	& 0 & 0 & 8 & 0 & 6 & 1 & 0 \\
			$(1,1,1,1,1,2,1,2,3,3)$	& 0 & 8 & 0 & 4 & 2 & 1 & 0 \\
			$(1,1,1,1,1,2,1,3,2,3)$	& 8 & 4 & 2 & 1 & 0 & 0 & 0 \\
			$(1,1,1,2,1,2,1,3,1,3)$	& 8 & 2 & 4 & 0 & 1 & 0 & 0 \\
			$(1,1,1,1,2,2,2,2,3,3)$	& 0 & 0 & 0 & 0 & 4 & 4 & 1 \\
			$(1, 1, 1, 1, 2, 2, 2, 3, 2, 3)$	& 0 & 0 & 4 & 2 & 2 & 1 & 0 \\
			$(1, 1, 1, 2, 1, 2, 2, 2, 3, 3)$	& 0 & 4 & 0 & 4 & 0 & 1 & 0 \\
			$(1, 1, 1, 2, 1, 2, 2, 3, 2, 3)$	& 4 & 2 & 2 & 0 & 1 & 0 & 0 \\
			$(1, 1, 1, 2, 1, 3, 2, 2, 2, 3)$	& 4 & 4 & 0 & 1 & 0 & 0 & 0 \\
			$(1, 2, 1, 2, 1, 2, 1, 2, 3, 3)$	& 0 & 6 & 0 & 0 & 3 & 0 & 0 \\
			$(1, 2, 1, 2, 1, 2, 1, 3, 2, 3)$	& 6 & 0 & 3 & 0 & 0 & 0 & 0 \\
			$\vdots$
		\end{tabular}
	\end{table}
	

\end{document}